\providecommand{\U}[1]{\protect\rule{.1in}{.1in}}
\newtheorem{theorem}{Theorem}
\newtheorem{corollary}[theorem]{Corollary}
\newtheorem{definition}[theorem]{Definition}
\newtheorem{lemma}[theorem]{Lemma}
\newtheorem{proposition}[theorem]{Proposition}
\newtheorem{remark}[theorem]{Remark}
\newenvironment{proof}[1][Proof]{\noindent\textbf{#1.} }{\ \rule{0.5em}{0.5em}}
\providecommand{\keywords}[1]
{
  \small	
  \textbf{\textit{Keywords:}} #1
}
\begin{document}
\title{Inviscid limit for Stochastic Navier-Stokes Equations under general initial conditions}
\author{Eliseo Luongo$^{1}$ \\ $^1$ Scuola Normale Superiore, Piazza dei Cavalieri,7, 56126 Pisa, Italy \\ Email: eliseo.luongo@sns.it}
\maketitle 

\begin{abstract}
  We consider in a smooth and bounded two dimensional domain the convergence in the $L^2$ norm, uniformly in time, of the solution of the stochastic Navier-Stokes equations with additive noise and no-slip boundary conditions to the solution of the corresponding Euler equations. We prove, under general regularity on the initial conditions of the Euler equations, that assuming the dissipation of the energy of the solution of the Navier-Stokes equations in a Kato type boundary layer, then the inviscid limit holds.
\end{abstract}
\keywords{Inviscid limit; turbulence; additive noise; no-slip boundary conditions; boundary layer; energy dissipation.}

\section{Introduction}
The study of the inviscid limit of the solutions of the Navier-Stokes equations is a classical topic in fluid mechanics. The Euler equations have very large classes of weak solutions, including non-dissipative ones \cite{bardos2014non}, but the inviscid limit can in some cases furnish a selection principle \cite{bardos2012vanishing}. In the case of domains without boundary several results are available in the deterministic case, see, for instance \cite{constantin1988navier}, \cite{da2009sharp}, \cite{golovkin1966vanishing}, \cite{kato1972nonstationary}, \cite{mcgrath1968nonstationary}. In the case of domains with boundary the difficulty of the problem changes drastically considering different boundary conditions also in the two dimensional case. Indeed, if we consider the so called Navier boundary conditions, some results are available both in the deterministic and in the stochastic case (see for example \cite{cipriano2015inviscid}, \cite{lions1996mathematical}). The no-slip boundary conditions are more challenging. This is due to the appearance of the boundary layer. So far, only few results are available in this framework. They can be splitted in two macro-categories:\begin{enumerate}
    \item Conditioned results, namely proving that if the solution of the Navier-Stokes equations has a particular behavior in the boundary layer, then the inviscid limit holds true. These are the most common kind of results available for what concern the inviscid limit with no-slip boundary conditions. See for instance \cite{constantin2015inviscid}, \cite{kato1984remarks}, \cite{temam1997behavior}, \cite{wang2001kato}.
    \item Unconditioned results. They are based on strong assumptions about the symmetry of the domain and of the data \cite{lopes2008vanishing}, \cite{lopes2008vanishing2}, or real analytic data \cite{sammartino1998zero}, or the vanishing of the Eulerian initial vorticity in a neighborhood of the boundary \cite{maekawa2014inviscid}.
\end{enumerate}
The results of this paper go in the first direction. In particular our goal is to generalize the results of \cite{kato1984remarks} to the stochastic framework and to not classical solutions of the Euler equations. We consider the stochastic Navier-Stokes equations with additive noise and no-slip boundary conditions in a smooth, bounded, two dimensional domain, proving that, under suitable assumptions on the behavior of their solutions in the boundary layer and of the additive noise, we have strong convergence to the solution of the deterministic Euler equations for vanishing viscosity.\\ In section \ref{Functional Setting} we introduce the mathematical problem, giving some well known results about the well posedness of the stochastic Navier-Stokes equations with additive noise and the Euler equations, and stating our main theorems. In sections \ref{Sec Thm strong strong } and \ref{Sec Theorem weak strong} we prove our main theorems. Lastly in section \ref{Sec Rem} we add some deterministic results related to Theorem \ref{P.L. Lions} and Theorem \ref{Theorem weak strong}.\\ Theorem \ref{Thm strong strong} and Theorem \ref{Theorem weak strong} can be seen as introductory results for the analysis of the zero noise-zero viscosity limit following the idea of \cite{bafico1982small}. These kind of results are relevant for the analysis of a selection principle for the solutions of the Euler equations.
\section{Main Results}\label{Functional Setting} Let $D\subseteq \mathbb{R}^2$ be smooth and bounded, $T>0$ fixed and $\left(\Omega,\mathcal{F},\mathcal{F}_t,\mathbb{P}\right)$ a filtered probability space.\\
Let $Z$ be a separable Hilbert space, denote by $L^2(\mathcal{F}_{t_0},Z)$ the space of square integrable random variables with values in $Z$, measurable with respect to $\mathcal{F}_{t_0}$. Moreover, denote by $C_{\mathcal{F}}\left(  \left[  0,T\right]  ;Z\right)  $ the space
of continuous adapted processes $\left(  X_{t}\right)  _{t\in\left[
0,T\right]  }$ with values in $Z$ such that
\[
\mathbb{E}\left[  \sup_{t\in\left[  0,T\right]  }\left\Vert X_{t}\right\Vert
_{Z}^{2}\right]  <\infty
\]
and by $L_{\mathcal{F}}^{2}\left(  0,T;Z\right)  $ the space of progressively
measurable processes $\left(  X_{t}\right)  _{t\in\left[  0,T\right]  }$ with
values in $Z$ such that
\[
\mathbb{E}\left[  \int_{0}^{T}\left\Vert X_{t}\right\Vert _{Z}^{2}dt\right]
<\infty.
\]

Denote by $L^{2}\left(  D;\mathbb{R}^2\right)  $ and $H^{k}\left(  D;\mathbb{R}^2 \right)  $ the usual
Lebesgue and Sobolev spaces and by $H_{0}^{k}\left(  D;\mathbb{R}^2\right)  $ the closure
in $W^{k}\left(  D\right)  $ of smooth compact support vector valued functions. Set
$$H=\{f \in L^2(D;\mathbb{R}^2),\ \operatorname{div}f=0,\ f\cdot n|_{\partial D}=0\},\ V=H_0^1(D;\mathbb{R}^2)\cap H,\ D(A)=H^2\cap V.  $$ We denote by $\langle\cdot,\cdot\rangle$ and $\lVert\cdot\rVert$ the inner product and the norm in $H$ respectively.\\ 
Denote by $P$ the projection of $L^2\left(D;\mathbb{R}^2\right)$ on $H$ and define the unbounded linear operator $A:D(A)\subseteq H\rightarrow H$ by the identity \begin{align*}
    \langle A v, w\rangle=\langle \Delta v, w \rangle
\end{align*}
for all $v \in D(A),\ w \in H$. $A$ will be called the Stokes operator. It is well known (see for example \cite{temam2001navier}) that $A$ generates an analytic semigroup of negative type on $H$ and moreover $V=D\left(\left(-A)^{1/2}\right)\right).$\\
Let us consider a sequence of real Brownian motions $\{W^k_t\}_{k=1}^N$ adapted to $\mathcal{F}_t$ and a sequence of functions $\{\sigma_k\}_{k=1}^N\subseteq D(A)$. Let us, moreover, assume that $u_0^{\nu}\in L^2(\mathcal{F}_{0},H)$.\\ Let us consider the stochastic Navier-Stokes equations below. Some physical motivations for the introduction of this model can be found in \cite{flaWas}.\begin{align}
\begin{cases}
    d u^{\nu} &=-(-\nu\Delta u^{\nu}+\nabla u^{\nu}\cdot u^{\nu}+\nabla p^{\nu})dt+ \nu^{\frac{1}{2}}\sum_{k=1}^N \sigma_k dW^k_t\ t\in [0,T] \\
    u^{\nu}(0) & =u_0^{\nu}.    
\end{cases}
\label{additive noise}
\end{align} 

\begin{definition}\label{weak solution}
Given  $u_{0}^{\nu}\in L^{2}(\mathcal{F}_{0},H)$, we say that a stochastic process $u^{\nu}$ 
is a weak solution of equation (\ref{additive noise}) if
$$
u^{\nu} \in C_{\mathcal{F}}([0,T];H)\cap L_{\mathcal{F}}^{2}(0,T;V)
$$
and for every $\phi\in D(A)$, we have
\begin{align*}
    \langle u^{\nu}(t),\phi\rangle-\int_0^t b(u^{\nu}(s),\phi,u^{\nu}(s))\ ds=\langle u^{\nu}_0, \phi\rangle+\nu\int_0^t \langle u^{\nu}(s),A\phi\rangle\ ds+\nu^{\frac{1}{2}}\sum_{k=1}^N\langle\sigma_k,\phi\rangle W^k_t,
\end{align*}
for every $t \in[0,T],\ \mathbb{P}-a.s.$
\end{definition}
Under previous assumptions on the coefficient $\sigma_k$, equation (\ref{additive noise}) is well posed. Indeed the following theorem holds, see \cite{flaWas}.
\begin{theorem}\label{Well posed additive noise }
If $u_0^{\nu}\in L^2(\mathcal{F}_0,H),\ \{\sigma_k\}_{k=1}^N\subseteq D(A)$, there exists a unique weak solution of equation (\ref{additive noise}). Moreover the following relations hold:
\begin{align}
    \mathbb{E}\left[\lVert u^{\nu}(t)\rVert^2\right]+2\nu\int_0^t\mathbb{E}\left[\lVert u^{\nu}(s)\rVert_V^2 \right]\ ds=\mathbb{E}\left[\lVert u_0^{\nu}\rVert^2\right]+t\nu \sum_{k=1}^N \lVert \sigma_k\rVert^2 \label{energy eq}
\end{align} \begin{align}
\mathbb{E}\left[sup_{t\in [0,T]}\lVert u^{\nu}(t)\rVert^2\right]& \leq \mathbb{E}\left[\lVert u_0^{\nu}\rVert_{L^2(D)}^2\right]+T\nu^{\frac{1}{2}} \sum_{k=1}^N \lVert \sigma_k\rVert^2+ K\nu^{\frac{1}{2}} \sum_{k=1}^N\mathbb{E}\left[\int_0^T\langle u^{\nu}(s),\sigma_k\rangle^2 \ ds\right]^{\frac{1}{2}}, \label{uniform energy}   
\end{align}
\begin{align}
    \lVert u^{\nu}(t)\rVert^2+2\nu\int_0^t\lVert\nabla u^{\nu}(s)\rVert_{L^2(D)}^2\ ds= \lVert u^{\nu}_0\rVert^2+t\nu\sum_{k=1}^N\lVert \sigma_k\rVert^2
    +2\nu^{\frac{1}{2}}\sum_{k=1}^N\int_0^t\langle u^{\nu}(s),\sigma_k\rangle dW^k_s.\label{Ito}
\end{align}
where $K$ is independent from $\nu$. 
\end{theorem}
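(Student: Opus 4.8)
The plan is to eliminate the pressure via the Leray projection $P$ and rewrite (\ref{additive noise}) as the abstract stochastic evolution equation in $H$
\begin{align*}
    du^{\nu}=\left(\nu A u^{\nu}-B(u^{\nu},u^{\nu})\right)dt+\nu^{\frac{1}{2}}\sum_{k=1}^N\sigma_k\,dW^k_t,\qquad u^{\nu}(0)=u_0^{\nu},
\end{align*}
where $B(u,v)=P\left(\nabla v\cdot u\right)$ and $b(u,v,w)=\langle B(u,v),w\rangle$. Since the noise is additive and $\{\sigma_k\}_{k=1}^N\subseteq D(A)$, the most economical route is to decouple the stochastic part. Let $z^{\nu}$ be the Stokes stochastic convolution solving $dz^{\nu}=\nu A z^{\nu}\,dt+\nu^{\frac{1}{2}}\sum_{k=1}^N\sigma_k\,dW^k_t$ with $z^{\nu}(0)=0$; because $A$ generates an analytic semigroup of negative type on $H$ and $\sigma_k\in D(A)$, the process $z^{\nu}$ has $\mathbb{P}$-a.s. paths in $C([0,T];V)$ with all the moments we need. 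Setting $v^{\nu}=u^{\nu}-z^{\nu}$ reduces the problem to the random, but pathwise deterministic, equation
\begin{align*}
    \partial_t v^{\nu}=\nu A v^{\nu}-B\left(v^{\nu}+z^{\nu},v^{\nu}+z^{\nu}\right),\qquad v^{\nu}(0)=u_0^{\nu},
\end{align*}
which can be solved $\omega$ by $\omega$ with the classical two dimensional theory.

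For existence and uniqueness of $v^{\nu}$ I would build Galerkin approximations on the span of the first $n$ eigenfunctions of $A$, derive uniform bounds in $L^{\infty}(0,T;H)\cap L^2(0,T;V)$ from an energy estimate (using $b(w,w,w)=0$ and the two dimensional interpolation inequality $\lVert w\rVert_{L^4}\leq C\lVert w\rVert^{\frac{1}{2}}\lVert w\rVert_V^{\frac{1}{2}}$ to absorb the terms containing $z^{\nu}$), and pass to the limit by the Aubin--Lions compactness lemma. Uniqueness follows by writing the equation for the difference of two solutions and applying Gronwall's inequality together with the standard two dimensional bilinear estimate. Undoing the substitution gives $u^{\nu}=v^{\nu}+z^{\nu}\in C_{\mathcal{F}}([0,T];H)\cap L^2_{\mathcal{F}}(0,T;V)$ with the adaptedness required by Definition \ref{weak solution}.

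For the three relations I would apply the Itô formula to $t\mapsto\lVert u^{\nu}(t)\rVert^2$ in the Gelfand triple $V\subseteq H\subseteq V'$. The drift produces $2\nu\langle Au^{\nu},u^{\nu}\rangle=-2\nu\lVert\nabla u^{\nu}\rVert_{L^2(D)}^2$ after integrating by parts under the no-slip condition, the nonlinear term disappears because $b(u^{\nu},u^{\nu},u^{\nu})=0$, and the Itô correction from the additive noise yields the deterministic term $t\nu\sum_{k=1}^N\lVert\sigma_k\rVert^2$; this is exactly the pathwise identity (\ref{Ito}). Taking expectations, the stochastic integral is a true martingale since $u^{\nu}\in C_{\mathcal{F}}([0,T];H)$ guarantees $\mathbb{E}\int_0^T\langle u^{\nu},\sigma_k\rangle^2\,ds<\infty$, so its mean vanishes and (\ref{energy eq}) follows. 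To obtain (\ref{uniform energy}) I would take the supremum over $t\in[0,T]$ in (\ref{Ito}), discard the nonnegative dissipation term, take expectations and control the martingale by the Burkholder--Davis--Gundy inequality,
\begin{align*}
    \mathbb{E}\left[\sup_{t\in[0,T]}\left|2\nu^{\frac{1}{2}}\sum_{k=1}^N\int_0^t\langle u^{\nu}(s),\sigma_k\rangle\,dW^k_s\right|\right]\leq K\nu^{\frac{1}{2}}\sum_{k=1}^N\mathbb{E}\left[\left(\int_0^T\langle u^{\nu}(s),\sigma_k\rangle^2\,ds\right)^{\frac{1}{2}}\right];
\end{align*}
bounding the right-hand side further by Jensen's inequality $\mathbb{E}[X^{1/2}]\leq(\mathbb{E}[X])^{1/2}$ produces the term appearing in (\ref{uniform energy}), while for $\nu\leq 1$ the deterministic contribution satisfies $t\nu\sum_{k=1}^N\lVert\sigma_k\rVert^2\leq T\nu^{\frac{1}{2}}\sum_{k=1}^N\lVert\sigma_k\rVert^2$, matching the stated constant.

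The main obstacle is the rigorous justification of the Itô formula for $\lVert u^{\nu}\rVert^2$: the solution only lives in $V$, not in $D(A)$, so (\ref{Ito}) cannot be read off from a naive chain rule. The clean way is to establish it first at the Galerkin level, where everything is finite dimensional and classical, and then pass to the limit while controlling each term, or alternatively to invoke the variational Itô formula of Pardoux and Krylov--Rozovskii for equations posed in $V\subseteq H\subseteq V'$ after checking its measurability and integrability hypotheses. Once (\ref{Ito}) is secured, (\ref{energy eq}) and (\ref{uniform energy}) are routine consequences of taking expectations and applying Burkholder--Davis--Gundy as above.
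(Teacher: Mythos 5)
The paper does not prove this theorem at all: it is quoted from the reference \cite{flaWas}, so there is no internal proof to compare against. Your sketch is the standard argument for exactly this class of equations (finitely many modes of additive noise with $\sigma_k\in D(A)$ in two dimensions): subtract the Stokes stochastic convolution, solve the resulting random PDE pathwise by Galerkin plus Aubin--Lions, and recover (\ref{Ito}) by passing the finite-dimensional It\^o identity to the limit, after which (\ref{energy eq}) and (\ref{uniform energy}) follow by taking expectations and applying Burkholder--Davis--Gundy together with Jensen's inequality, precisely as you describe. You correctly flag the only delicate point (justifying the It\^o formula for $\lVert u^{\nu}\rVert^2$ at the level of $V$-valued solutions rather than $D(A)$-valued ones) and propose the two standard remedies. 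The only cosmetic remark is that replacing $t\nu$ by $T\nu^{\frac{1}{2}}$ in (\ref{uniform energy}) uses $\nu\leq 1$, which you note and which is harmless in the vanishing-viscosity regime the paper cares about. I see no gap.
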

Equation (\ref{energy eq}) will be called energy equality in the following, instead equation (\ref{Ito}) will be called Itô formula.\\
For our purposes we will need a different relation satisfied by $u^{\nu}$ that will be clarified by the following lemma.

\begin{lemma}\label{Weak solution impl}
Under the same assumptions of Theorem \ref{Well posed additive noise }, if $u^{\nu}$ 
is a weak solution of equation (\ref{additive noise}), then for each $\phi \in C([0,T]; V)\cap C^1([0,T];H)$ \begin{align*}& \langle u^{\nu}(t),\phi(t)\rangle=\langle u^{\nu}(0),\phi (0)\rangle+\int_0^t\langle u^{\nu}(s),\partial_s\phi(s)\rangle\ ds\\& -\nu\int_0^t \langle(-A)^{\frac{1}{2}}u^{\nu}(s), (-A)^{\frac{1}{2}}\phi(s)\rangle\ ds  +\int_0^t b(u^{\nu}(s),\phi(s),u^{\nu}(s))\ ds\\& + \nu^{\frac{1}{2}}\sum_{k}\langle\sigma_k, \phi(t)\rangle W^k_t-\nu^{\frac{1}{2}}\sum_{k=1}^N\int_0^t \langle \sigma_k, \phi(s)\rangle W^k_s \ ds\
\end{align*}
for every $t \in[0,T],\ \mathbb{P}-a.s.$
\end{lemma}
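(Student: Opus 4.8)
The plan is to first prove the identity for elementary time-dependent test functions of the separated form $\phi(t)=\psi(t)\varphi$, with $\varphi\in D(A)$ fixed and $\psi\in C^1([0,T];\mathbb{R})$ scalar, and then to recover the general case by linearity followed by a density argument. Fix such a $\phi$ and set $X_t=\langle u^\nu(t),\varphi\rangle$. By Definition \ref{weak solution}, $X_t$ is a continuous real-valued It\^o semimartingale,
\begin{align*}
X_t=X_0+\int_0^t\Big(b(u^\nu(s),\varphi,u^\nu(s))+\nu\langle u^\nu(s),A\varphi\rangle\Big)\,ds+\nu^{\frac12}\sum_{k=1}^N\langle\sigma_k,\varphi\rangle W^k_t,
\end{align*}
and, since $u^\nu\in C_{\mathcal F}([0,T];H)$, both sides are a.s.\ continuous in $t$, so this holds simultaneously for all $t\in[0,T]$ on a single $\mathbb{P}$-null set.

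Because $\psi$ is a deterministic $C^1$ function it has vanishing quadratic covariation with $X$, so the It\^o product rule gives $d(\psi(t)X_t)=\psi'(t)X_t\,dt+\psi(t)\,dX_t$. I would substitute the representation of $dX_s$ and simplify each piece: the term $\int_0^t\psi'(s)\langle u^\nu(s),\varphi\rangle\,ds$ becomes $\int_0^t\langle u^\nu(s),\partial_s\phi(s)\rangle\,ds$; the viscous term is recast by the identity $\langle v,A\varphi\rangle=-\langle(-A)^{1/2}v,(-A)^{1/2}\varphi\rangle$ (valid for $v\in V$, $\varphi\in D(A)$, since $V=D((-A)^{1/2})$) into $-\nu\int_0^t\langle(-A)^{1/2}u^\nu(s),(-A)^{1/2}\phi(s)\rangle\,ds$; and the nonlinear term becomes $\int_0^t b(u^\nu(s),\phi(s),u^\nu(s))\,ds$ by linearity of $b$ in its second argument. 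For the martingale part I would use the deterministic integration by parts $\int_0^t\psi(s)\,dW^k_s=\psi(t)W^k_t-\int_0^t\psi'(s)W^k_s\,ds$, which produces the boundary term $\nu^{1/2}\sum_k\langle\sigma_k,\phi(t)\rangle W^k_t$ together with $-\nu^{1/2}\sum_k\int_0^t\langle\sigma_k,\partial_s\phi(s)\rangle W^k_s\,ds$, i.e.\ the final two terms of the statement. By linearity this establishes the identity for every finite combination $\phi(t)=\sum_i\psi_i(t)\varphi_i$.

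For a general $\phi\in C([0,T];V)\cap C^1([0,T];H)$ I would approximate it by such combinations using the orthonormal basis $\{e_j\}\subseteq D(A)$ of eigenfunctions of the Stokes operator: set $\phi_n(t)=\sum_{j\le n}\langle\phi(t),e_j\rangle e_j$, so that each coefficient $\langle\phi(t),e_j\rangle$ is $C^1$ with derivative $\langle\partial_t\phi(t),e_j\rangle$. Since $\{\phi(t):t\in[0,T]\}$ is compact in $V$ and $\{\partial_t\phi(t):t\in[0,T]\}$ is compact in $H$, the spectral truncations converge uniformly in time, giving $\phi_n\to\phi$ in $C([0,T];V)$ and $\partial_t\phi_n\to\partial_t\phi$ in $C([0,T];H)$. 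It then remains to pass to the limit term by term in the identity already proved for $\phi_n$: the boundary and the noise terms are finite sums that converge pointwise, while the drift and viscous integrals converge from the uniform convergence of $\partial_t\phi_n$ in $H$ and of $(-A)^{1/2}\phi_n$ in $H$ together with $u^\nu\in C([0,T];H)\cap L^2(0,T;V)$. The step I expect to be the main obstacle is the convergence of the nonlinear term $\int_0^t b(u^\nu,\phi_n,u^\nu)\,ds$; here I would invoke the two dimensional bound $|b(u^\nu(s),\eta,u^\nu(s))|\le C\lVert u^\nu(s)\rVert_{L^4}^2\lVert\eta\rVert_V$ and the Ladyzhenskaya inequality $\lVert u^\nu(s)\rVert_{L^4}^2\le C\lVert u^\nu(s)\rVert\,\lVert u^\nu(s)\rVert_V$, which makes $s\mapsto\lVert u^\nu(s)\rVert_{L^4}^2$ integrable on $[0,T]$ $\mathbb{P}$-a.s., so that $\int_0^t|b(u^\nu,\phi_n-\phi,u^\nu)|\,ds\le C\,\lVert\phi_n-\phi\rVert_{C([0,T];V)}\int_0^T\lVert u^\nu(s)\rVert_{L^4}^2\,ds\to0$. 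Collecting a single null set, using once more the a.s.\ continuity in $t$ of both sides, yields the identity for all $t\in[0,T]$ simultaneously.
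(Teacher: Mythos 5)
Your proof is correct, but it takes a genuinely different route from the paper's. The paper first extends the weak formulation by density to time-independent test functions in $V$, then obtains the time-dependent identity for a general $\phi$ by a partition/telescoping argument: it writes $\langle u^{\nu}(t_{i+1}),\phi(t_{i+1})\rangle-\langle u^{\nu}(t_i),\phi(t_i)\rangle$ using the fundamental theorem of calculus in the second slot and the weak formulation in the first, sums over the partition, and passes to the limit of vanishing mesh via dominated convergence and the It\^o isometry. You instead prove the identity exactly on separated test functions $\psi(t)\varphi$ with $\varphi\in D(A)$ via the It\^o product rule (plus the deterministic integration by parts for $\int_0^t\psi\,dW^k$), and only then approximate a general $\phi$ by spectral truncations in the Stokes eigenbasis. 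Your route handles the stochastic term more cleanly (no Riemann sums of stochastic integrals to control), at the price of invoking the eigenbasis and a compactness argument to get uniform-in-time convergence of $P_n\phi$ in $V$ and of $P_n\partial_t\phi$ in $H$; the paper's route works directly with the given $\phi$ and needs no spectral decomposition. Both proofs rely on the same two-dimensional Ladyzhenskaya-type bound to dominate the nonlinear term, which you correctly flag as the delicate convergence. One point worth noting: your computation yields $-\nu^{1/2}\sum_k\int_0^t\langle\sigma_k,\partial_s\phi(s)\rangle W^k_s\,ds$ in the last term, which is also what the paper's own proof derives; the lemma as printed writes $\langle\sigma_k,\phi(s)\rangle$ there, which appears to be a typo in the statement that you have silently corrected, so your identification of these as ``the final two terms of the statement'' should be read with that caveat.
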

\begin{proof}
Thanks to the regularity of the weak solution $u^{\nu}$, by density we have that for each $\phi \in V$ \begin{align*}
    \langle u^{\nu}(t),\phi\rangle-\int_0^t b(u^{\nu}(s),\phi,u^{\nu}(s))\ ds &=\langle u^{\nu}(0), \phi\rangle+\nu^{\frac{1}{2}}\sum_{k=1}^N\langle \sigma_k,\phi\rangle W^k_t\\ & -\nu\int_0^t \langle(-A)^{\frac{1}{2}}u^{\nu}(s),(-A)^{\frac{1}{2}}\phi\rangle\ ds,
\end{align*}
for every $t \in[0,T],\ \mathbb{P}-a.s.$ Let now $\phi(t)\in C^{1}([0,T];H)\cap
C([0,T];V)$. Let, moreover, $\pi=\{0=t_{0}<t_{1}<\dots<T_{n}=T\}$ be a partition of 
$[0,T]$. Thus, using the identities
\[
\langle u^{\nu}(t_{i+1}),\phi(t_{i+1})\rangle-\langle u^{\nu}(t_{i+1}),\phi(t_{i})\rangle=\int_{t_{i}%
}^{t_{i+1}} \langle u^{\nu}(t_{i+1}),\partial_{s} \phi(s)\rangle \ ds,
\]
\[
\langle \sigma_k W^k_{t_{i+1}},\phi(t_{i+1})\rangle-\langle \sigma_k W^k_{t_{i+1}},\phi(t_{i})\rangle=\int_{t_{i}%
}^{t_{i+1}} \langle \sigma_k W^k_{t_{i+1}},\partial_{s} \phi(s)\rangle \ ds,
\]
we get
\begin{align*}
\langle u^{\nu}(t_{i+1}),\phi(t_{i+1})\rangle& =\langle u^{\nu}(t_{i}),\phi(t_{i})\rangle-\int_{t_{i}%
}^{t_{i+1}}\langle (-A)^{\frac{1}{2}}u^{\nu}(s),(-A)^{\frac{1}{2}}\phi(t_{i})\rangle\ ds\\
& + \int_{t_i}^{t_{i+1}}b(u^{\nu}(s),\phi(t_i),u^{\nu}(s))\ ds\\ &+\int_{t_{i}}^{t_{i+1}}\langle u^{\nu}(t_{i+1}),\partial_{s}\phi(s)\rangle\ ds\\ &-\nu^{\frac{1}{2}}\sum_{k=1}^N%
\int_{t_{i}}^{t_{i+1}}\langle \sigma_k W^{k}(t_{i+1}),\partial_s\phi(s)\rangle\ ds\\ &+\nu^{\frac{1}{2}} \sum_{k=1}^N\left(\langle\sigma_k W^k_{t_{i+1}},\phi(t_{i+1}) \rangle-\langle \sigma_kW^k_{t_{i}},\phi(t_{i}) \rangle\right).
\end{align*}
It implies 
\begin{align*}
\langle u^{\nu}(T),\phi(T)\rangle& =\langle u^{\nu}(0),\phi(0)\rangle-\int_{0%
}^{T}\langle (-A)^{\frac{1}{2}}u^{\nu}(s),(-A)^{\frac{1}{2}}\phi(s_{\pi}^-)\rangle\ ds\\
& + \int_{0}^{T}b(u^{\nu}(s),\phi(s_{\pi}^-),u^{\nu}(s))\ ds\\ &+\int_{0}^{T}\langle u^{\nu}(s_{\pi}^+),\partial_{s}\phi(s)\rangle\ ds-\nu^{\frac{1}{2}}\sum_{k=1}^N%
\int_{0}^{T}\langle \sigma_k W^{k}_{s_{\pi}^+},\partial_s\phi(s)\rangle\ ds\\ &+\nu^{\frac{1}{2}} \sum_{k=1}^N\left(\langle \sigma_k W^k_{T},\phi(T) \rangle-\langle \sigma_k W^k_{0},\phi(0) \rangle\right).
\end{align*}
where $s_{\pi}^{-}(s)=t_{i}$ if
$s\in[t_{i},t_{i+1}]$ and $s_{\pi}^{+}(s)=t_{i+1}$ if $s\in[t_{i},t_{i+1}]$.
Taking the limit over a sequence of partitions $\pi_N$ with size going to zero, we get
\begin{align*}
\langle u^{\nu}(T),\phi(T)\rangle& =\langle u^{\nu}(0),\phi(0)\rangle-\int_{0%
}^{T}\langle (-A)^{\frac{1}{2}}u^{\nu}(s),(-A)^{\frac{1}{2}}\phi(s)\rangle\ ds\\
& + \int_{0}^{T}b(u^{\nu}(s),\phi(s),u^{\nu}(s))\ ds\\ &+\int_{0}^{T}\langle u^{\nu}(s),\partial_{s}\phi(s)\rangle\ ds-\nu^{\frac{1}{2}}\sum_{k=1}^N%
\int_{0}^{T}\langle \sigma_k W^{k}_{s},\partial_s\phi(s)\rangle\ ds\\ &+\nu^{\frac{1}{2}} \sum_{k=1}^N\left(\langle \sigma_k W^k_{T},\phi(T) \rangle-\langle \sigma_k W^k_{0},\phi(0) \rangle\right).
\end{align*}
(thanks to the regularity of $u,\ \phi$, dominated convergence theorem and Itô isometry). The argument applies to a generic $t\in[0,T]$, hence we have the thesis.
\end{proof}

Let us now consider the Euler equations
\begin{align}
    \begin{cases}
        \partial_t \bar{u}+\nabla \bar{u}\cdot \bar{u}+\nabla p=\bar{f} \ (x,t)\in D\times(0,T)\\ 
        \operatorname{div}\bar{u}=0 \\ 
        \bar{u}\cdot n|_{\partial D}=0 \\ 
        \bar{u}(0)=\bar{u}_0
    \end{cases}
\end{align}\label{Euler}
\begin{definition}\label{Well Posed Euler }
Given $\bar{u}_0\in H,\ f\in L^2(0,T;H)$ we say that $\bar{u}\in C(0,T;H)$ is a weak solution of equation (\ref{Euler}) if for every $\phi \in C([0,T]; V)\cap C^1([0,T];H)$ \begin{align*} \langle \bar{u}(t),\phi(t)\rangle=\langle \bar{u}_0,\phi (0)\rangle+\int_0^t\langle \bar{u}(s),\partial_s\phi(s)\rangle\ ds +\int_0^t b(\bar{u}(s),\phi(s),\bar{u}(s))\ ds+\int_0^t\langle \bar{f}(s),\phi(s)\rangle\ ds
\end{align*}
for every $t \in[0,T]$ and the energy inequality \begin{align*}
    \lVert{\bar{u}(t)}\rVert^2\leq\lVert{\bar{u}_0}\rVert^2+2\int_0^t \langle f(s),\bar{u}(s)\rangle \ ds 
\end{align*}
holds.
\end{definition}
For what concern the well posedness of the Euler equations the following results hold true, see \cite{kato1967classical}, \cite{lions1996mathematical}.
\begin{theorem}\label{Kato classiche}
If $\bar{u}_0\in C^{1,\epsilon}(\bar{D})\cap H$ and $\bar{f}\in C^{1,\epsilon}([0,T]\times\bar{D})\cap L^2(0,T;H)$, then there exist $\bar{u}, \bar{p}$ classical solutions of equation (\ref{Euler}). Moreover, $\bar{u},\  \nabla\bar{u},\ p,\ \nabla p\in C([0,T]\times\bar{D})$,\  $\bar{u}$ is unique and $p$ is unique up to an arbitrary function of $t$ which can be added to $p$.
\end{theorem}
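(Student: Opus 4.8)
The plan is to pass to the vorticity formulation, which is the classical route for two-dimensional Euler. Writing $\omega=\partial_1\bar u_2-\partial_2\bar u_1$ and taking the scalar curl of the momentum equation, the pressure drops out and the nonlinearity collapses into pure transport, giving
\begin{align*}
\partial_t\omega+\bar u\cdot\nabla\omega=g,\qquad g:=\operatorname{curl}\bar f,\qquad \omega(0)=\omega_0:=\operatorname{curl}\bar u_0.
\end{align*}
The velocity is recovered from $\omega$ by the Biot--Savart law for the domain $D$: one solves $\Delta\psi=\omega$ in $D$ with $\psi$ constant on each boundary component (this encodes $\bar u\cdot n|_{\partial D}=0$) and sets $\bar u=\nabla^\perp\psi$. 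The reason the Hölder scale is the natural one is that the Schauder estimates for this elliptic problem on the smooth bounded domain $D$ gain exactly two derivatives up to the boundary, so that $\omega\in C^{\epsilon}(\bar D)$ yields $\psi\in C^{2,\epsilon}(\bar D)$ and hence $\bar u\in C^{1,\epsilon}(\bar D)$ with $\lVert\bar u\rVert_{C^{1,\epsilon}}\le C\lVert\omega\rVert_{C^{\epsilon}}$. Since $\bar u_0\in C^{1,\epsilon}(\bar D)\cap H$ and $\bar f\in C^{1,\epsilon}$, the data $\omega_0,g\in C^{\epsilon}(\bar D)$ fit this framework.

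With this dictionary I would construct a solution by an iteration (or a mollification plus compactness) scheme. Because the reconstructed velocity is Lipschitz in space, the characteristic flow $\dot X(t,\alpha)=\bar u(t,X(t,\alpha))$, $X(0,\alpha)=\alpha$, is a well-defined, volume-preserving (as $\operatorname{div}\bar u=0$) diffeomorphism of $\bar D$, along which the vorticity is transported,
\begin{align*}
\omega(t,X(t,\alpha))=\omega_0(\alpha)+\int_0^t g(s,X(s,\alpha))\,ds .
\end{align*}
Closing the scheme requires a priori control of the Hölder norm of $\omega$. In two dimensions the $L^\infty$ norm is globally bounded for free, $\lVert\omega(t)\rVert_{L^\infty}\le\lVert\omega_0\rVert_{L^\infty}+\int_0^t\lVert g(s)\rVert_{L^\infty}\,ds$, since $\omega$ is merely transported and forced; the Hölder seminorm then obeys a Gronwall inequality driven by $\lVert\nabla\bar u\rVert_{L^\infty}$.

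The main obstacle is precisely here: $\lVert\nabla\bar u\rVert_{L^\infty}$ is \emph{not} bounded by $\lVert\omega\rVert_{L^\infty}$ alone, because the Biot--Savart operator loses a logarithm at the $L^\infty$ endpoint. The standard remedy is the log-Lipschitz elliptic estimate $\lVert\nabla\bar u\rVert_{L^\infty}\le C\lVert\omega\rVert_{L^\infty}\big(1+\log^{+}[\omega]_{C^{\epsilon}}\big)$; inserting it into the evolution of $[\omega(t)]_{C^{\epsilon}}$ converts a would-be quadratic (finite-time-blowup) inequality into a logarithmic one, producing a double-exponential but globally finite bound, hence global existence and the stated continuity of $\bar u,\nabla\bar u$. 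The same log-Lipschitz modulus of continuity of $\bar u$ delivers uniqueness: for two solutions the difference of the flows, or equivalently an $L^2$ estimate on $\bar u_1-\bar u_2$, satisfies an Osgood-type inequality whose only solution issuing from zero is zero. Finally the pressure is recovered from $\bar u$ by the Neumann problem $\Delta p=-\operatorname{div}(\bar u\cdot\nabla\bar u)+\operatorname{div}\bar f$ with $\partial_n p=(\bar f-\bar u\cdot\nabla\bar u)\cdot n$ on $\partial D$; the divergence-free condition turns $\operatorname{div}(\bar u\cdot\nabla\bar u)$ into the genuine Hölder function $\operatorname{tr}\big((\nabla\bar u)^2\big)$, so that $\Delta p\in C^{\epsilon}$ and elliptic regularity gives $p,\nabla p\in C([0,T]\times\bar D)$ with $p$ determined up to an additive function of $t$. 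Testing the momentum equation against $\bar u$, the boundary and divergence-free conditions annihilating the transport and pressure terms, yields the energy balance and in particular the energy inequality required in Definition \ref{Well Posed Euler }.
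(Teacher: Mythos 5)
The paper does not actually prove this theorem: it is imported from \cite{kato1967classical} and \cite{lions1996mathematical}, and your outline reproduces essentially the classical argument of those references (vorticity transport along the flow, Schauder estimates for the stream-function reconstruction, the logarithmic interpolation bound on $\lVert\nabla\bar u\rVert_{L^\infty}$ yielding a double-exponential but finite bound on $[\omega(t)]_{C^{\epsilon}}$, Osgood-type uniqueness, and recovery of $p$ from a Neumann problem, which also gives its uniqueness up to a function of $t$). The only place where your sketch as written is incomplete is the Biot--Savart step: $D$ is merely assumed smooth and bounded, hence possibly multiply connected, and then the stream function with locally constant boundary values does not determine $\bar u$ from $\omega$ alone --- the velocity is fixed only after prescribing the circulations $\oint_{\Gamma_i}\bar u\cdot d\ell$ around the inner boundary components. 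These evolve by $\frac{d}{dt}\oint_{\Gamma_i}\bar u\cdot d\ell=\oint_{\Gamma_i}\bar f\cdot d\ell$ (the pressure gradient integrates to zero on a closed loop), so they are explicitly determined by the data and your iteration closes once they are carried along; you should state this to make the reconstruction $\omega\mapsto\bar u$, and hence the whole scheme, well defined.
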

\begin{theorem}\label{P.L. Lions}
If $f=0$, $u$ is a weak solution of the Euler equations with initial condition $u_0\in H$ and $\Bar{u}$ is the unique weak solution of the Euler equations with initial condition $\Bar{u}_0\in H\cap C^{1,\epsilon}(\bar{D})$, then \begin{align*}
    \lVert (u-\bar{u})(t)\rVert^2\leq e^{2t\lVert\nabla \bar{ u}\rVert_{L^{\infty}([0,T]\times D)}}\lVert u_0- \bar{u}_0\rVert^2.\end{align*}
Calling
    $$O_n =\left\{u_0\in H: \ \exists \bar{u_0}\in H\cap C^{1,\epsilon}(\bar{D}),\ \lVert u_0-\bar{u}_0\rVert <\frac{1}{n}e^{-3T\lVert\nabla \bar{ u}\rVert_{L^{\infty}([0,T]\times D)}}\right\}$$ 
where $\bar{u}$ is the solution of the Euler equations with initial condition $\Bar{u}_0$, then for each $u_0\in \bigcap_{n\geq 1}O_n=:\Tilde{O} $ there exists a unique $u\in C([0,T],H)$ weak solution of the Euler equations with initial condition $u_0$.
Moreover the energy equality \begin{align*}
    \lVert u(t)\rVert^2=\lVert u_0\rVert^2
\end{align*}
holds.
\end{theorem}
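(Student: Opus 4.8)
The plan is to establish the two assertions in turn: first the stability inequality, by a relative energy (weak--strong uniqueness) argument, and then existence, uniqueness and the energy equality for data in $\tilde O$, by bootstrapping the stability inequality along an approximating sequence of classical solutions.

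For the stability inequality I would set $w=u-\bar u$ and expand $\|w(t)\|^{2}=\|u(t)\|^{2}-2\langle u(t),\bar u(t)\rangle+\|\bar u(t)\|^{2}$. The first term is controlled by the energy inequality contained in Definition \ref{Well Posed Euler } (which for $f=0$ reads $\|u(t)\|^{2}\le\|u_{0}\|^{2}$), while $\|\bar u(t)\|^{2}=\|\bar u_{0}\|^{2}$ holds because $\bar u$ is the classical solution of Theorem \ref{Kato classiche} and therefore conserves energy (test its equation against itself; the trilinear term vanishes by antisymmetry and the pressure term because $\bar u$ is divergence free and tangent to $\partial D$). The cross term is the one that genuinely uses the equations: I would insert $\bar u$ as a test field in the weak formulation of $u$ and use, pointwise in time, the classical identity $\partial_{s}\bar u=-\nabla\bar u\cdot\bar u-\nabla\bar p$, so that $\langle u,\partial_{s}\bar u\rangle=-b(\bar u,\bar u,u)$ (the pressure drops out against the divergence free field $u$). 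Collecting the three contributions gives
\[
\|w(t)\|^{2}\le\|w(0)\|^{2}+2\int_{0}^{t}\big(b(\bar u,\bar u,u)-b(u,\bar u,u)\big)\,ds=\|w(0)\|^{2}-2\int_{0}^{t}b(w,\bar u,w)\,ds ,
\]
where I used $b(\bar u,\bar u,u)-b(u,\bar u,u)=-b(w,\bar u,u)$ together with $b(w,\bar u,\bar u)=0$ (antisymmetry again, valid since $w$ is divergence free and tangent to the boundary). The elementary bound $|b(w,\bar u,w)|\le\|\nabla\bar u\|_{L^{\infty}([0,T]\times D)}\|w\|^{2}$ and the Gr\"onwall lemma then yield the claimed estimate.

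For the second part, fix $u_{0}\in\tilde O$ and choose for each $n$ a datum $\bar u_{0}^{(n)}\in H\cap C^{1,\epsilon}(\bar D)$ with $\|u_{0}-\bar u_{0}^{(n)}\|<\tfrac1n e^{-3T\|\nabla\bar u^{(n)}\|_{L^{\infty}}}$, where $\bar u^{(n)}$ is the classical solution with datum $\bar u_{0}^{(n)}$. Uniqueness is then immediate from the first part: any weak solution $u$ with datum $u_{0}$ obeys the energy inequality, so the stability inequality applied with $\bar u=\bar u^{(n)}$ gives $\|u(t)-\bar u^{(n)}(t)\|^{2}\le e^{2T\|\nabla\bar u^{(n)}\|_{L^{\infty}}}\|u_{0}-\bar u_{0}^{(n)}\|^{2}\le \tfrac1{n^{2}}$, forcing every weak solution to be the $H$--limit of $\bar u^{(n)}(t)$ and hence two of them to coincide. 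For existence I would show that $(\bar u^{(n)})_{n}$ is a Cauchy sequence in $C([0,T];H)$: applying the stability inequality to the pair $\bar u^{(n)},\bar u^{(m)}$, and crucially assigning the role of the strong solution to whichever of the two has the smaller Lipschitz constant, the exponential factor is absorbed by the weight in the definition of $O_{n}$ and one is left with $\sup_{t}\|\bar u^{(n)}(t)-\bar u^{(m)}(t)\|^{2}\lesssim n^{-2}+m^{-2}$. The limit $u\in C([0,T];H)$ is a weak solution (pass to the limit in the weak formulation, the trilinear term converging because $\bar u^{(n)}\to u$ in $C([0,T];H)$ and the test field has bounded gradient), and the energy equality $\|u(t)\|^{2}=\|u_{0}\|^{2}$ follows by passing to the limit in $\|\bar u^{(n)}(t)\|^{2}=\|\bar u_{0}^{(n)}\|^{2}$.

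I expect two delicate points. The minor one, in the stability inequality, is checking that the classical field $\bar u$ is an admissible test field for the weak formulation of $u$ even though it need not vanish on $\partial D$: this is harmless because every term of the weak formulation extends continuously to divergence free fields that are merely tangent to the boundary, so the full Dirichlet condition is not actually used. The genuine obstacle is in the second part, namely the uniform control of the factors $e^{2T\|\nabla\bar u^{(n)}\|_{L^{\infty}}}$: the Lipschitz norms of the approximating classical solutions may diverge as $n\to\infty$, and the Cauchy estimate closes only because the neighborhoods $O_{n}$ measure the distance of the data in the weighted scale $e^{-3T\|\nabla\bar u^{(n)}\|_{L^{\infty}}}$ and because one may always let the solution with the smaller gradient play the role of the strong solution in the estimate.
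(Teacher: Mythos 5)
Your proposal is correct and follows essentially the same route as the paper: the paper does not prove Theorem \ref{P.L. Lions} in the text (it cites the references for it), but its proof of the generalization Lemma \ref{P.L. Lions style} in Section \ref{Sec Rem} is exactly your argument --- a weak--strong stability estimate obtained by testing the weak formulation of $u$ against the classical solution $\bar u$, the cancellation $b(w,\bar u,\bar u)=0$ reducing everything to $b(w,\bar u,w)$, Gr\"onwall, and then a Cauchy-sequence construction in $C([0,T];H)$ in which the weighted neighborhoods $O_n$ absorb the exponential factors, with uniqueness and the energy equality passed through the limit. Your remark that in the Cauchy estimate one should let the approximant with the smaller Lipschitz constant play the role of the strong solution is a slightly more careful bookkeeping than the paper's choice of simply taking $n\ge m$, but it is the same idea.
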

For each $u_0\in \Tilde{O}$ we will say that $\{\bar{u}^m_0\}_{m\in \mathbb{N}}$ approximates $u_0$ in the sense of Theorem \ref{P.L. Lions} if $\bar{u}^m_0\in H\cap C^{1,\epsilon}(\bar{D})$ and $$\lVert u_0-\bar{u}^m_0\rVert <\frac{1}{m}e^{-3T\lVert\nabla \bar{ u}^m\rVert_{L^{\infty}([0,T]\times D)}}$$ 
where $\bar{u}^m$ is the solution of the Euler equations with initial condition $\Bar{u}_0^m$.\\ 
Lastly we introduce some results related to the boundary layer corrector of the solution of the Euler equations, see \cite{kato1984remarks}.
\begin{proposition}
Under the assumptions of Theorem \ref{Kato classiche}:
\begin{itemize}
    \item it exists a smooth skew-symmetric matrix  $a$ such that $\bar{u}=\operatorname{div}a $ on $\partial D$ and $a=0$ on $\partial D$;
    \item   let  $\xi: \mathbb{R}^{+}\rightarrow \mathbb{R}^+$ a smooth function such that $\xi(0)=1,\  \xi(r)=0 $ if $r\geq 1$ and $$z:D\rightarrow \mathbb{R}^{+},\ z(x)=\xi (\rho/\delta) \  \ with \ \ \rho=dist(x,\partial D)$$ and $\delta$ a parameter which goes to 0 when $\nu$ goes to $0$. Let, moreover,  $v=\operatorname{div}(za)$. Then, $$\bar{u}-v\in C([0,T]; V)\cap C^1([0,T];H).$$
$\operatorname{supp}(v)$ is the boundary layer of width $\delta$ that we denote by $\Gamma_{\delta}$; 
\item the following estimates hold true
$$\lVert v(t)\rVert_{L^{\infty}(D)}\leq K, \ \lVert v(t)\rVert_{L^2(D)}\leq K\delta^{\frac{1}{2}}, \ \lVert\partial_t v(t)\rVert_{L^2(D)}\leq K\delta^{\frac{1}{2}},$$  $$\lVert\nabla v(t)\rVert_{L^{\infty}(D)}\leq K\delta^{-1}, 
\  \lVert\nabla v(t)\rVert_{L^2(D)}\leq K\delta^{-1/2},  \  \lVert\rho(t)\nabla v(t)\rVert_{L^{\infty}(D)}\leq K,$$ $$  \lVert\rho(t)^2 \nabla v(t)\rVert_{L^{\infty}(D)}\leq K\delta, \  \lVert\rho(t) \nabla v(t)\rVert_{L^2(D)}\leq K\delta^{\frac{1}{2}}.$$ where the coefficient $K$ depends from $\bar{u}$ and it is independent from $t$.
\end{itemize}
\end{proposition}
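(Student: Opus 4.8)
The plan is to build the skew-symmetric matrix $a$ from a stream function, define $v=\operatorname{div}(za)$, and then read off every estimate by counting powers of $\delta$; the only genuinely delicate point is producing $a$ with the bound $|a|\lesssim\rho$ near $\partial D$.

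\textbf{Construction of $a$.} Since $\bar u$ is divergence free and $\bar u\cdot n=0$ on $\partial D$, the field $\bar u^{\perp}=(-\bar u_2,\bar u_1)$ is curl free, so in a tubular neighborhood of each boundary component it is the gradient of a scalar $\psi$; equivalently $\bar u=\nabla^{\perp}\psi$ with $\nabla^{\perp}=(\partial_2,-\partial_1)$. Because $\bar u$ is tangent to $\partial D$, $\psi$ is constant on each component, and subtracting that (locally) constant value we may take $\psi=0$ on $\partial D$. Setting $a=\psi J$ with $J=\left(\begin{smallmatrix}0&1\\-1&0\end{smallmatrix}\right)$ and extending $a$ smoothly to all of $D$, one checks that $a$ is skew symmetric, $a=0$ on $\partial D$, and $\operatorname{div}a=\nabla^{\perp}\psi=\bar u$ near $\partial D$ (in particular on $\partial D$). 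Theorem \ref{Kato classiche} gives $\psi\in C^1([0,T];C^1(\bar D))$ with $\nabla\psi$ and $\nabla^2\psi=\nabla\bar u$ bounded; since $a$ vanishes on the fixed boundary for every $t$, the mean value theorem along the normal yields the key bound $|a(x,t)|\le\lVert\nabla a\rVert_\infty\,\rho(x)$, and likewise $|\partial_t a|\lesssim\rho$.

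\textbf{Definition and admissibility of $v$.} With $v=\operatorname{div}(za)$ one has the pointwise identity $v=z\,\operatorname{div}a+a\nabla z=z\,\bar u+a\nabla z$. Since $za$ is again skew symmetric, $\operatorname{div}v=\operatorname{div}\operatorname{div}(za)=0$, so $v$ is divergence free; moreover $z=1$ and $a=0$ on $\partial D$ force $v=\bar u$ there. Hence $\bar u-v$ is divergence free and vanishes on $\partial D$, i.e. $\bar u-v\in V$, and the $C^1$ time dependence of $\bar u$ and $a$ yields $\bar u-v\in C([0,T];V)\cap C^1([0,T];H)$. Finally $\operatorname{supp}v\subseteq\operatorname{supp}z=\Gamma_\delta$.

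\textbf{Estimates.} On $\Gamma_\delta$ one has $\rho\le\delta$, $|\Gamma_\delta|\le K\delta$, $|z|\le K$, $|\nabla^k z|\le K\delta^{-k}$, while $|a|\le K\rho\le K\delta$ and $\bar u,\nabla\bar u,\nabla a$ are bounded. Feeding these into $v=z\bar u+a\nabla z$ gives $|v|\le K$, because the potentially singular term obeys $|a\nabla z|\le K\rho\,\delta^{-1}\le K$; this is the first estimate, and multiplying by $|\Gamma_\delta|^{1/2}$ gives $\lVert v\rVert_{L^2}\le K\delta^{1/2}$. Differentiating, $\nabla v=\nabla z\otimes\bar u+z\nabla\bar u+(\nabla a)\nabla z+a\,\nabla^2 z$, whose four terms are $O(\delta^{-1})$, $O(1)$, $O(\delta^{-1})$ and $O(\rho\delta^{-2})=O(\delta^{-1})$ respectively, so $\lVert\nabla v\rVert_{L^\infty}\le K\delta^{-1}$ and then $\lVert\nabla v\rVert_{L^2}\le K\delta^{-1/2}$. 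For the weighted bounds multiply $\nabla v$ by $\rho$ (resp. $\rho^2$) and use $\rho\delta^{-1}\le1$ on each term; the worst contribution $a\,\nabla^2 z$ becomes $O(\rho^2\delta^{-2})=O(1)$ (resp. $O(\rho^3\delta^{-2})=O(\delta)$), giving $\lVert\rho\nabla v\rVert_{L^\infty}\le K$, $\lVert\rho^2\nabla v\rVert_{L^\infty}\le K\delta$, and $\lVert\rho\nabla v\rVert_{L^2}\le K\delta^{1/2}$. The same bookkeeping applied to $\partial_t v=z\,\partial_t\bar u+(\partial_t a)\nabla z$ (using $|\partial_t a|\lesssim\rho$) yields $\lVert\partial_t v\rVert_{L^2}\le K\delta^{1/2}$.

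\textbf{Main obstacle.} The only step that is not pure bookkeeping is the construction of $a$: one must produce a skew symmetric, boundary vanishing potential whose divergence reproduces the tangential trace of $\bar u$, and crucially with $|a|\lesssim\rho$, since it is exactly this gain of one power of $\rho$ that converts the a priori $O(\delta^{-1})$ term $a\nabla z$ into an $O(1)$ term and keeps $\lVert v\rVert_{L^\infty}$ bounded uniformly in $\delta$. The mild complication that on a multiply connected $D$ the global stream function need not vanish on every component is circumvented by performing the construction in a tubular neighborhood of each component separately, which suffices because $z$ is supported in $\Gamma_\delta$.
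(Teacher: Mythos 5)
Your construction is correct and is exactly the standard one: the paper itself states this proposition without proof, deferring to the cited reference \cite{kato1984remarks}, where the corrector is built precisely from a stream function $\psi$ vanishing on the boundary (so that $a=\psi J$ satisfies $|a|\lesssim\rho$), with the estimates following from the same power counting in $\rho$ and $\delta$ that you carry out. The only point worth making explicit is that the single-valuedness of $\psi$ in an annular neighborhood of each boundary component uses the vanishing of the flux $\oint \bar{u}\cdot n\,dl=0$, which your tangency assumption supplies, so there is no gap.
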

Now we can state our main theorems. Since the stochastic term in equation (\ref{additive noise}) goes to $0$, we assume that the external force in the Euler equations is identically $0$. Theorem \ref{Thm strong strong} is a generalization of the results in \cite{kato1984remarks} to this stochastic framework and also the idea of the proof is similar. In Theorem \ref{Theorem weak strong} we consider a wilder set of initial conditions. 

\begin{theorem}\label{Thm strong strong}
If $\bar{u}_0\in C^{1,\epsilon}(\bar{D})$ and under previous assumptions on $u_0^{\nu}$ and $\sigma_k$, if $$\lim_{\nu\rightarrow 0}\mathbb{E}\left[\lVert u^{\nu}_0-\bar{u}_0\rVert^2\right]=0, $$ then the following are equivalent:
\begin{enumerate}
    \item $\lim_{\nu \rightarrow 0}\mathbb{E}\left[\sup_{t\in [0,T]}\lVert u^{\nu}-\bar{u}\rVert^2\right]= 0.$
    \item $u^{\nu}(t)\rightharpoonup\bar{u}(t) $ in $L^2(\Omega\times D)$ for each $t\in [0,T]$.
    \item $\lim_{\nu\rightarrow 0}\nu\int_0^T \mathbb{E}\left[\lVert\nabla u^{\nu}(t) \rVert^2_{L^2(D)}\right] \ dt= 0.$
    \item $\lim_{\nu\rightarrow 0}\nu\int_0^T \mathbb{E}\left[\lVert\nabla u^{\nu}(t) \rVert^2_{L^2(\Gamma_{c\nu})}\right] \ dt= 0.$
\end{enumerate}
\end{theorem}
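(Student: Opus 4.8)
The plan is to prove the four statements equivalent through the cycle $(1)\Rightarrow(2)\Rightarrow(3)\Rightarrow(4)\Rightarrow(1)$, so that the genuine work is concentrated in the last arrow, a stochastic version of Kato's criterion. Two of the implications are essentially free. $(1)\Rightarrow(2)$ holds because $\mathbb{E}[\sup_{t}\lVert u^{\nu}-\bar u\rVert^{2}]\to 0$ forces $\mathbb{E}[\lVert u^{\nu}(t)-\bar u(t)\rVert^{2}]\to 0$ for each $t$, i.e. strong convergence in $L^{2}(\Omega\times D)$ and a fortiori weak convergence. And $(3)\Rightarrow(4)$ holds because $\Gamma_{c\nu}\subseteq D$, so that $\lVert\nabla u^{\nu}\rVert_{L^{2}(\Gamma_{c\nu})}\le\lVert\nabla u^{\nu}\rVert_{L^{2}(D)}$ and hence the boundary layer dissipation is dominated by the global one.

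For $(2)\Rightarrow(3)$ I would combine the energy equality (\ref{energy eq}) with weak lower semicontinuity of the norm. Since $u_{0}^{\nu}\to\bar u_{0}$ in $L^{2}(\Omega\times D)$ we have $\mathbb{E}[\lVert u_{0}^{\nu}\rVert^{2}]\to\lVert\bar u_{0}\rVert^{2}$, while $\nu\sum_{k}\lVert\sigma_{k}\rVert^{2}\to 0$; dropping the nonnegative dissipation term in (\ref{energy eq}) gives $\limsup_{\nu}\mathbb{E}[\lVert u^{\nu}(t)\rVert^{2}]\le\lVert\bar u_{0}\rVert^{2}=\lVert\bar u(t)\rVert^{2}$, the last equality being the energy conservation of Theorem \ref{P.L. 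Lions}. On the other hand, weak convergence in $L^{2}(\Omega\times D)$ yields the reverse inequality $\lVert\bar u(t)\rVert^{2}\le\liminf_{\nu}\mathbb{E}[\lVert u^{\nu}(t)\rVert^{2}]$. Hence $\mathbb{E}[\lVert u^{\nu}(t)\rVert^{2}]\to\lVert\bar u(t)\rVert^{2}$, and reinserting this limit into (\ref{energy eq}) at $t=T$ forces $2\nu\int_{0}^{T}\mathbb{E}[\lVert\nabla u^{\nu}\rVert_{L^{2}(D)}^{2}]\,dt\to 0$, which is $(3)$.

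The core is $(4)\Rightarrow(1)$, which I would run as an energy comparison against the corrector $v=\operatorname{div}(za)$ of the preceding proposition, with the Kato scaling $\delta=c\nu$. Writing $\lVert u^{\nu}(t)-\bar u(t)\rVert^{2}=\lVert u^{\nu}(t)\rVert^{2}-2\langle u^{\nu}(t),\bar u(t)\rangle+\lVert\bar u(t)\rVert^{2}$, I expand the first term by the Itô formula (\ref{Ito}), use $\lVert\bar u(t)\rVert^{2}=\lVert\bar u_{0}\rVert^{2}$, and split $\langle u^{\nu},\bar u\rangle=\langle u^{\nu},\bar u-v\rangle+\langle u^{\nu},v\rangle$. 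The term $\langle u^{\nu},v\rangle$ is $O(\nu^{1/2})$ in expectation because $\lVert v\rVert\le K\delta^{1/2}$ and $\mathbb{E}[\sup_{t}\lVert u^{\nu}\rVert]$ is bounded through (\ref{uniform energy}). Since $\bar u-v\in C([0,T];V)\cap C^{1}([0,T];H)$, the term $\langle u^{\nu}(t),\bar u(t)-v(t)\rangle$ is expanded by Lemma \ref{Weak solution impl}, and $\partial_{t}\bar u$ is replaced using the Euler equation (the pressure drops out against $u^{\nu}\in H$). The nonlinear contributions then combine, using $b(w,\bar u,\bar u)=0$ for $w=u^{\nu}-\bar u\in H$, into $-2\int_{0}^{t}b(u^{\nu}-\bar u,\bar u,u^{\nu}-\bar u)\,ds$, which is bounded by the Gronwall term $2\lVert\nabla\bar u\rVert_{L^{\infty}}\int_{0}^{t}\lVert u^{\nu}-\bar u\rVert^{2}\,ds$.

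What remains are error terms that must be shown to vanish. The viscous cross term $\nu\int\langle\nabla u^{\nu},\nabla\bar u\rangle$ is $O(\nu^{1/2})$ (or absorbed by the dissipation via Young), the corrector term $\int\langle u^{\nu},\partial_{t}v\rangle$ is $O(\nu^{1/2})$ since $\lVert\partial_{t}v\rVert\le K\delta^{1/2}$, and the stochastic contributions have zero mean, the martingale part being controlled for the supremum by the Burkholder--Davis--Gundy inequality, which again produces $O(\nu^{1/2})$ owing to the factor $\nu^{1/2}$ in (\ref{additive noise}). The main obstacle is the pair of genuine boundary-layer terms. The viscous one, $\nu\int\langle\nabla u^{\nu},\nabla v\rangle$, is estimated by Cauchy--Schwarz using $\lVert\nabla v\rVert_{L^{2}}\le K\delta^{-1/2}$ and the fact that $\nabla v$ is supported in $\Gamma_{\delta}=\Gamma_{c\nu}$, giving a bound of order $(\nu\int_{0}^{T}\mathbb{E}[\lVert\nabla u^{\nu}\rVert_{L^{2}(\Gamma_{c\nu})}^{2}]\,dt)^{1/2}$, which tends to $0$ by $(4)$. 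The nonlinear one, $\int b(u^{\nu},v,u^{\nu})$, is the crux: writing $|b(u^{\nu},v,u^{\nu})|\le\lVert\rho^{2}\nabla v\rVert_{L^{\infty}}\int_{\Gamma_{\delta}}|u^{\nu}|^{2}/\rho^{2}$ and invoking a localized Hardy inequality $\int_{\Gamma_{\delta}}|u^{\nu}|^{2}/\rho^{2}\le C\lVert\nabla u^{\nu}\rVert_{L^{2}(\Gamma_{\delta})}^{2}$ together with $\lVert\rho^{2}\nabla v\rVert_{L^{\infty}}\le K\delta=Kc\nu$, this term is bounded by $KCc\,\nu\int_{0}^{T}\mathbb{E}[\lVert\nabla u^{\nu}\rVert_{L^{2}(\Gamma_{c\nu})}^{2}]\,dt\to 0$, again by $(4)$. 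Collecting everything into $\mathbb{E}[\sup_{s\le t}\lVert u^{\nu}-\bar u\rVert^{2}]\le\mathbb{E}[\lVert u_{0}^{\nu}-\bar u_{0}\rVert^{2}]+R^{\nu}+2\lVert\nabla\bar u\rVert_{L^{\infty}}\int_{0}^{t}\mathbb{E}[\sup_{r\le s}\lVert u^{\nu}-\bar u\rVert^{2}]\,ds$ with $R^{\nu}\to 0$, Gronwall's lemma closes the argument. The delicate points are exactly the choice $\delta=c\nu$, which makes both boundary-layer terms governed precisely by hypothesis $(4)$, and the use of Burkholder--Davis--Gundy to upgrade the estimate from fixed $t$ to the uniform-in-time bound demanded by $(1)$.
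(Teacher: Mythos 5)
Your proposal is correct and follows essentially the same route as the paper: the same cycle of implications, the same use of the energy equality and weak lower semicontinuity for $(2)\Rightarrow(3)$, and the same Kato corrector argument for $(4)\Rightarrow(1)$ with the scaling $\delta=c\nu$, the Hardy inequality, and the $\lVert\rho^2\nabla v\rVert_{L^\infty}\leq K\delta$ estimate. The only structural difference is that you close with a single Gronwall inequality applied to $\mathbb{E}\left[\sup_{s\le t}\lVert u^{\nu}-\bar u\rVert^2\right]$ (using Burkholder--Davis--Gundy for the martingale part), whereas the paper proceeds in two steps --- first proving $\sup_{t}\mathbb{E}\left[\lVert u^{\nu}-\bar u\rVert^2\right]\to 0$ by Gronwall in Proposition \ref{weak result strong assump}, then feeding that into the $\mathbb{E}\left[\sup_t\cdot\right]$ estimate in Corollary \ref{Strong result strong assum} so that no second Gronwall is needed; both closures are valid.
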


\begin{theorem}\label{Theorem weak strong}
If $u_0\in \Tilde{O}$, $u_0^n\in L^2(\mathcal{F}_0,H),\ \lim_{n\rightarrow+\infty}\mathbb{E}\left[\lVert u_0^n-u_0\rVert^2\right]=0$. Let $u$ be the solution of the Euler equations with initial condition $u_0$, $u^n$ be the solution of the stochastic Navier-Stokes equations with viscosity $\nu_n$ and initial condition $u^n_0$. If $$\lim_{n\rightarrow+\infty }\nu_n=0,\ \ \lim_{n\rightarrow +\infty}\nu_n\int_0^T \mathbb{E}\left[\lVert\nabla u^{n}(t) \rVert^2_{L^2(\Gamma_{c\nu_n})}\right] \ dt= 0,  $$ then 
$$\lim_{n \rightarrow +\infty}\mathbb{E}\left[\sup_{t\in [0,T]}\lVert u^{n}-u\rVert^2\right]= 0. $$
\end{theorem}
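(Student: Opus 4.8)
The plan is to deduce the statement from Theorem \ref{Thm strong strong} by approximating the rough Euler datum $u_0\in\Tilde{O}$ with classical data, and to use the quantitative stability estimate of Theorem \ref{P.L. Lions} to control the resulting approximation error uniformly in time. Since $u_0\in\Tilde{O}$, I first fix a sequence $\{\bar{u}_0^m\}_{m\in\mathbb{N}}\subseteq H\cap C^{1,\epsilon}(\bar{D})$ approximating $u_0$ in the sense of Theorem \ref{P.L. Lions}, and denote by $\bar{u}^m$ the classical Euler solution with datum $\bar{u}_0^m$ (which exists by Theorem \ref{Kato classiche}). By the definition of $\Tilde{O}$, namely $\lVert u_0-\bar{u}_0^m\rVert<\frac{1}{m}e^{-3T\lVert\nabla\bar{u}^m\rVert_{L^{\infty}}}$, together with the stability estimate of Theorem \ref{P.L. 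Lions}, I get for every $t\in[0,T]$
\[
\lVert (u-\bar{u}^m)(t)\rVert^2\leq e^{2T\lVert\nabla\bar{u}^m\rVert_{L^{\infty}}}\lVert u_0-\bar{u}_0^m\rVert^2<\frac{1}{m^2}e^{-4T\lVert\nabla\bar{u}^m\rVert_{L^{\infty}}}\leq\frac{1}{m^2},
\]
so that $\bar{u}^m\to u$ uniformly on $[0,T]$ in $H$, with an explicit rate that already absorbs the (possibly large) Lipschitz constant of $\bar{u}^m$.

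The triangle inequality then gives
\[
\mathbb{E}\left[\sup_{t\in[0,T]}\lVert u^n-u\rVert^2\right]\leq 2\,\mathbb{E}\left[\sup_{t\in[0,T]}\lVert u^n-\bar{u}^m\rVert^2\right]+\frac{2}{m^2}.
\]
For the first term I re-run the Kato-type energy estimate underlying the implication $(4)\Rightarrow(1)$ of Theorem \ref{Thm strong strong}, now between the stochastic Navier--Stokes solution $u^n$ (viscosity $\nu_n$, datum $u_0^n$) and the \emph{classical} solution $\bar{u}^m$, this time keeping track of the mismatch of the initial data. I take the corrector $v^m$ of $\bar{u}^m$ with width $\delta=c\nu_n$, so that $\operatorname{supp}(v^m)=\Gamma_{c\nu_n}$ and the various corrector contributions are controlled either by positive powers of $\nu_n$ times constants $K(\bar{u}^m)$, or by $K(\bar{u}^m)\big(\nu_n\int_0^T\mathbb{E}[\lVert\nabla u^n\rVert^2_{L^2(\Gamma_{c\nu_n})}]\,dt\big)^{1/2}$. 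This yields a bound of the form
\[
\mathbb{E}\left[\sup_{t\in[0,T]}\lVert u^n-\bar{u}^m\rVert^2\right]\leq e^{2T\lVert\nabla\bar{u}^m\rVert_{L^{\infty}}}\,\mathbb{E}\left[\lVert u_0^n-\bar{u}_0^m\rVert^2\right]+E_n^m,
\]
where, for each \emph{fixed} $m$, one has $E_n^m\to0$ as $n\to\infty$ thanks to $\nu_n\to0$ and the standing hypothesis $\nu_n\int_0^T\mathbb{E}[\lVert\nabla u^n\rVert^2_{L^2(\Gamma_{c\nu_n})}]\,dt\to0$.

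Combining the two displays and writing $\mathbb{E}[\lVert u_0^n-\bar{u}_0^m\rVert^2]\leq 2\,\mathbb{E}[\lVert u_0^n-u_0\rVert^2]+2\lVert u_0-\bar{u}_0^m\rVert^2$, the datum-mismatch term splits into a piece $\leq C e^{2T\lVert\nabla\bar{u}^m\rVert_{L^{\infty}}}\mathbb{E}[\lVert u_0^n-u_0\rVert^2]$, which vanishes as $n\to\infty$ for fixed $m$, and a piece $\leq\frac{C}{m^2}e^{-4T\lVert\nabla\bar{u}^m\rVert_{L^{\infty}}}\leq\frac{C}{m^2}$, using again the calibration of the approximation. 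Taking first $\limsup_{n\to\infty}$ at fixed $m$ (so that the constant $e^{2T\lVert\nabla\bar{u}^m\rVert_{L^{\infty}}}$ and all the $\bar{u}^m$-dependent constants inside $E_n^m$ stay bounded) and then $m\to\infty$, I obtain
\[
\limsup_{n\to\infty}\mathbb{E}\left[\sup_{t\in[0,T]}\lVert u^n-u\rVert^2\right]\leq\frac{C}{m^2},
\]
and letting $m\to\infty$ gives the claim.

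The main obstacle is the order of limits. The Gronwall constant $e^{2T\lVert\nabla\bar{u}^m\rVert_{L^{\infty}}}$ degenerates as $m\to\infty$ and multiplies both the initial-data error and the boundary-layer error $E_n^m$. The approximation in Theorem \ref{P.L. Lions} is calibrated, through the factor $e^{-3T\lVert\nabla\bar{u}^m\rVert_{L^{\infty}}}$, precisely so that this exponential is beaten by $\lVert u_0-\bar{u}_0^m\rVert^2$; however $E_n^m$ carries $\bar{u}^m$-dependent constants that are only tamed by sending $n\to\infty$ first. One therefore cannot let $m$ and $n$ tend to infinity simultaneously, and the careful separation of the terms that are small in $m$ from those that are small in $n$ (for fixed $m$) is the delicate point of the argument.
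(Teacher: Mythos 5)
Your proposal is correct and follows essentially the same route as the paper: calibrated smooth approximants $\bar{u}_0^m$ in the sense of Theorem \ref{P.L. Lions}, the Kato corrector at scale $\delta=c\nu_n$, and the order of limits $n\to\infty$ first at fixed $m$, with the Gronwall factor $e^{2T\lVert\nabla\bar{u}^m\rVert_{L^{\infty}}}$ absorbed by the calibration of $\lVert u_0-\bar{u}_0^m\rVert$. The only organizational difference is that the paper first proves an intermediate lemma with the supremum outside the expectation (Lemma \ref{Lemma weak hp weak th}) and feeds it into the final estimate to control $\lVert\nabla\bar{u}^m\rVert_{L^{\infty}}\mathbb{E}\left[\int_0^T\lVert u^n-\bar{u}^m\rVert^2\,ds\right]$, whereas you apply Gronwall directly to $\mathbb{E}\left[\sup_{s\le t}\lVert u^n-\bar{u}^m\rVert^2\right]$; both are legitimate.
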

\begin{remark}

Theorem \ref{Thm strong strong} means that if convergence does not take place, the energy dissipation within the boundary layer of width $c\nu$ must remain finite as $\nu \rightarrow 0$. This suggests that something violent must have happened.
\end{remark}
\begin{remark}\label{remark novelty}
Theorem \ref{Theorem weak strong} is new also in the deterministic framework, namely taking $\sigma_k=0\ \forall \ k=1,\dots, n$. In section \ref{Sec Rem} we will prove this result in the deterministic framework for a non-zero external force.
\end{remark}
\begin{remark}
$K$ will denote several constants dependent only from the solution of the Euler equations and its data, $\{ \sigma_k \}_{k=1}^N$ and $T$ in the following.
\end{remark}
\section{Proof of Theorem \ref{Thm strong strong}}\label{Sec Thm strong strong }
The proof of theorem \ref{Thm strong strong} follows from a preliminary weaker result, namely under the same assumptions $$\lim_{\nu \rightarrow 0}\sup_{t\in [0,T]}\mathbb{E}[\lVert u^{\nu}-u\rVert_H^2]= 0. $$ This is the analogous of the Kato's result in this stochastic framework.

\begin{proposition}\label{weak result strong assump}
Under the same assumptions of Theorem \ref{Thm strong strong}, if $$\lim_{\nu\rightarrow 0}\mathbb{E}\left[\lVert u^{\nu}_0-\bar{u}_0\rVert^2\right]=0, $$ then the following are equivalent:
\begin{enumerate}
    \item $\lim_{\nu \rightarrow 0}\sup_{t\in [0,T]}\mathbb{E}\left[\lVert u^{\nu}-\bar{u}\rVert^2\right]= 0.$
    \item $u^{\nu}(t)\rightharpoonup\bar{u}(t) $ in $L^2(\Omega\times D)$ for each $t\in [0,T]$.
    \item $\lim_{\nu\rightarrow 0}\nu\int_0^T \mathbb{E}\left[\lVert\nabla u^{\nu}(t) \rVert^2_{L^2(D)}\right] \ dt= 0.$
    \item $\lim_{\nu\rightarrow 0}\nu\int_0^T \mathbb{E}\left[\lVert\nabla u^{\nu}(t) \rVert^2_{L^2(\Gamma_{c\nu})}\right] \ dt= 0.$
\end{enumerate}
\end{proposition}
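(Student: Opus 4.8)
The plan is to prove the four statements equivalent through the cycle $(1)\Rightarrow(2)\Rightarrow(3)\Rightarrow(4)\Rightarrow(1)$, where only the last implication is substantial. The implication $(1)\Rightarrow(2)$ is immediate, since strong convergence in $L^2(\Omega\times D)$ uniform in time forces the weak convergence of $u^\nu(t)$ at each fixed $t$; likewise $(3)\Rightarrow(4)$ is immediate from $\Gamma_{c\nu}\subseteq D$, which gives $\lVert\nabla u^\nu\rVert_{L^2(\Gamma_{c\nu})}\leq\lVert\nabla u^\nu\rVert_{L^2(D)}$. Throughout I identify $\lVert\cdot\rVert_V$ with $\lVert\nabla\cdot\rVert_{L^2}$, as is legitimate since $V=D((-A)^{1/2})$.

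For $(2)\Rightarrow(3)$ I would run the energy / weak-lower-semicontinuity argument at time $T$. Evaluating the energy equality \eqref{energy eq} at $t=T$ and using that the classical Euler solution conserves energy, $\lVert\bar u(T)\rVert^2=\lVert\bar u_0\rVert^2$, together with $\mathbb{E}[\lVert u_0^\nu-\bar u_0\rVert^2]\to 0$ and $\nu\sum_k\lVert\sigma_k\rVert^2\to 0$, yields
\[
\limsup_{\nu\to 0}\Big(\mathbb{E}[\lVert u^\nu(T)\rVert^2]+2\nu\int_0^T\mathbb{E}[\lVert\nabla u^\nu(s)\rVert^2]\,ds\Big)=\lVert\bar u_0\rVert^2 .
\]
Hypothesis $(2)$ and the weak lower semicontinuity of the norm in $L^2(\Omega\times D)$ give $\liminf_{\nu\to 0}\mathbb{E}[\lVert u^\nu(T)\rVert^2]\geq\lVert\bar u_0\rVert^2$; since the dissipation term is nonnegative, both inequalities must be equalities, which forces $\nu\int_0^T\mathbb{E}[\lVert\nabla u^\nu\rVert^2]\to 0$, i.e.\ $(3)$.

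The core is $(4)\Rightarrow(1)$, the stochastic analogue of Kato's estimate. Taking the corrector width $\delta=c\nu$, the field $\phi=\bar u-v$ lies in $C([0,T];V)\cap C^1([0,T];H)$ and is therefore an admissible test function. I would expand
\[
\lVert u^\nu(t)-\bar u(t)\rVert^2=\lVert u^\nu(t)\rVert^2-2\langle u^\nu(t),\phi(t)\rangle-2\langle u^\nu(t),v(t)\rangle+\lVert\bar u_0\rVert^2
\]
and insert the It\^o formula \eqref{Ito} for $\lVert u^\nu\rVert^2$, Lemma \ref{Weak solution impl} for $\langle u^\nu,\phi\rangle$, and the Euler equation for $\partial_s\bar u$. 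After the standard cancellations (the pressure terms vanish because $u^\nu$ and $w:=u^\nu-\bar u$ are divergence free with zero normal trace, and $b(w,\bar u,\bar u)=0$ for the same reason), the nonlinear contribution collapses to $-2\int_0^t b(w,\bar u,w)\,ds$, bounded by $2\int_0^t\lVert\nabla\bar u\rVert_{L^\infty}\lVert w\rVert^2\,ds$, which is the Gr\"onwall term; the genuine dissipation $-2\nu\int_0^t\lVert\nabla u^\nu\rVert^2$ has the good sign and is dropped. Everything else is collected into a remainder $R^\nu(t)$, and after taking $\sup_t$ and $\mathbb{E}$ one concludes by the Burkholder--Davis--Gundy inequality (for the martingale parts) and Gr\"onwall's lemma, provided $\mathbb{E}[\sup_t R^\nu]\to 0$.

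The main obstacle is precisely the two corrector terms inside $R^\nu$, namely $\nu\int_0^t\langle\nabla u^\nu,\nabla v\rangle$ and $\int_0^t b(u^\nu,v,u^\nu)$, both supported in $\Gamma_{c\nu}$ where $\nabla v$ is as large as $\delta^{-1/2}$ in $L^2$. For the viscous term, Cauchy--Schwarz with $\lVert\nabla v\rVert_{L^2}^2\leq K\delta^{-1}=K(c\nu)^{-1}$ gives the bound $K\big(\nu\int_0^t\lVert\nabla u^\nu\rVert^2_{L^2(\Gamma_{c\nu})}\big)^{1/2}$, which vanishes in expectation exactly by $(4)$. For the nonlinear term I would exploit the no-slip condition $u^\nu|_{\partial D}=0$ through a Hardy/Poincar\'e inequality in the layer, combined with $\lVert\rho\nabla v\rVert_{L^\infty}\leq K$, to obtain $|b(u^\nu,v,u^\nu)|\leq K\delta\lVert\nabla u^\nu\rVert^2_{L^2(\Gamma_{c\nu})}=Kc\nu\lVert\nabla u^\nu\rVert^2_{L^2(\Gamma_{c\nu})}$, again controlled by $(4)$. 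The remaining pieces are routine: $\nu\int\langle\nabla u^\nu,\nabla\bar u\rangle=O(\nu^{1/2})$ by the global energy bound, the corrector cross terms $\langle u^\nu,v\rangle$ and $\langle u^\nu,\partial_s v\rangle$ are $O(\delta^{1/2})$, and the stochastic terms each carry a factor $\nu^{1/2}$ and are absorbed by Burkholder--Davis--Gundy. Matching the corrector width to the layer, $\delta=c\nu$, is exactly what makes these two critical estimates close.
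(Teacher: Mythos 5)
Your proposal is correct and follows essentially the same route as the paper: the cycle $(1)\Rightarrow(2)\Rightarrow(3)\Rightarrow(4)\Rightarrow(1)$, the energy equality at $t=T$ plus weak lower semicontinuity for $(2)\Rightarrow(3)$, and the Kato corrector $\bar u-v$ with the Hardy inequality, $\lVert\nabla v\rVert_{L^2}\leq K\delta^{-1/2}$, and the choice $\delta=c\nu$ for $(4)\Rightarrow(1)$. The only (harmless) deviation is that you place the supremum inside the expectation and invoke Burkholder--Davis--Gundy, which proves the stronger Corollary \ref{Strong result strong assum} directly, whereas for this Proposition the paper keeps the supremum outside so the martingale term simply vanishes under expectation and Gr\"onwall applies to $t\mapsto\mathbb{E}[\lVert u^\nu(t)-\bar u(t)\rVert^2]$.
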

\begin{proof}
$1.\Rightarrow 2.$ and $3. \Rightarrow 4.$ are obvious. We need only prove that 2. $\Rightarrow$ 3. and 4. $\Rightarrow$ 1.\begin{itemize}
    \item[2. $\Rightarrow$ 3.] By energy equality for each $t=T$
    \begin{align*}
        \nu\mathbb{E}\left[\int_0^T \lVert \nabla u^{\nu}(s)\rVert_{L^2(D)}^2\right]=\frac{1}{2}\mathbb{E}\left[\lVert u_0^{\nu}\rVert^2\right]-\frac{1}{2}\mathbb{E}\left[\lVert u^{\nu}(T)\rVert^2\right]+T\nu\sum_{k=1}^N \lVert\sigma_k\rVert^2.
    \end{align*}  Taking the limsup of this expression and exploiting the fact that under the assumptions \begin{align*}
        \mathbb{E}\left[\lVert u^{\nu}_0-\bar{u}_0\rVert^2\right]\rightarrow 0
    \end{align*}
    \begin{align*}
        \lVert \bar{u}(T)\rVert^2\leq\liminf_{\nu\rightarrow 0}\mathbb{E}\left[\lVert u^{\nu}(T)\rVert^2\right]
    \end{align*} we get the thesis.
    \item[4. $\Rightarrow$ 1.] For each time $t$ we have
\begin{align*}
     \mathbb{E}[\lVert u^{\nu}-\bar{u}\rVert^2]&=\mathbb{E}[\lVert u^{\nu}\rVert^2]+\lVert \bar{u}\rVert ^2-2\mathbb{E}[\langle u^{\nu},\bar{u}\rangle]\\ & \stackrel{energy \ eq.}{\leq}  \mathbb{E}[\lVert u_0^{\nu}\rVert^2]+t\nu\sum_{k}\lVert \sigma_k\rVert^2+\lVert \bar{u}_0\rVert^2-2\mathbb{E}[\langle u^{\nu},\bar{u}\rangle] \\ & \stackrel{\mathbb{E}[\lVert u_0^{\nu}-\bar{u}_0\rVert_{L^2(D)}^2]\rightarrow 0}{\leq} 
    o(1)+2\lVert \bar{u}_0\rVert^2+t\nu\sum_{k}\lVert \sigma_k\rVert^2-2\mathbb{E}[\langle u^{\nu},\bar{u}\rangle]\\ & = o(1)+2\lVert \bar{u}_0\rVert^2+t\nu\sum_{k}\lVert\sigma_k\rVert^2-2\mathbb{E}[\langle u^{\nu},\bar{u}-v\rangle]-2\mathbb{E}[\langle u^{\nu},v\rangle]
\end{align*}
Then \begin{align}
\mathbb{E}[\lVert u^{\nu}-\bar{u}\rVert^2]\leq  o(1)+2\lVert \bar{u}_0\rVert^2+t\nu\sum_{k}\lVert\sigma_k\rVert^2-2\mathbb{E}[\langle u^{\nu},\bar{u}-v\rangle]-2\mathbb{E}[\langle u^{\nu},v\rangle]
\label{first estimate first prop}\end{align}

To analyze the second-last term we use the weak formulation of $u^{\nu}$ for the test function $\bar{u}-v$.
\begin{align*} & -2\langle u^{\nu}(t),(\bar{u}-v)(t)\rangle =-2\langle u^{\nu}(0),(\bar{u}-v) (0)\rangle-2\int_0^t\langle u^{\nu}(s),\partial_s(\bar{u}-v)(s)\rangle\ ds+\\ & 2\nu\int_0^t \langle(-A)^{\frac{1}{2}}u^{\nu}(s), (-A)^{\frac{1}{2}}(\bar{u}-v)(s)\rangle\ ds- \int_0^t 2b(u^{\nu}(s),(\bar{u}-v)(s),u^{\nu}(s))\ ds-\\ & 2\nu^{\frac{1}{2}}\sum_{k}\langle \sigma_k, (\bar{u}-v)(t)\rangle W^k_t+ 2\nu^{\frac{1}{2}}\sum_{k=1}^N\int_0^t \langle \sigma_k, (\bar{u}-v)(s)\rangle W^k_s \ ds.
\end{align*} 
Taking the expected value of the last expression we obtain
\begin{align*}&  -2\mathbb{E}\left[\langle u^{\nu}(t),(\bar{u}-v)(t)\rangle\right]+2\mathbb{E}\left[\lVert u^{\nu}_0\rVert^2\right]\stackrel{\mathbb{E}\left[\lVert u_0^{\nu}-\bar{u}_0\rVert^2\right]\rightarrow 0, \ \lVert v(t)\rVert _{L^2(D)}\leq K\delta^{\frac{1}{2}} }{=}\\ & o(1)  -2\mathbb{E}\left[\int_0^t\langle u^{\nu}(s),\partial_s(\bar{u}-v)(s)\rangle\ ds\right]+  2\nu\mathbb{E}\left[\int_0^t \langle(-A)^{\frac{1}{2}}u^{\nu}(s), (-A)^{\frac{1}{2}}(\bar{u}-v)(s)\rangle\ ds\right]\\ & - \mathbb{E}\left[\int_0^t 2b(u^{\nu}(s),(\bar{u}-v)(s),u^{\nu}(s))\ ds\right].
\end{align*}
Moreover
\begin{align*}
    -\mathbb{E}\left[\langle u^{\nu}(s),\partial_s(\bar{u}-v)(s)\rangle\right] & \stackrel{energy \ eq. \ \lVert \partial_t v(t)\rVert_{L^2(D)}\leq K\delta^{\frac{1}{2}}}{=}o(1)-\mathbb{E}\left[\langle u^{\nu}(s),\partial_s \bar{u}(s)\rangle\right] \\ & \stackrel{Euler\  eq}{=} o(1)+\mathbb{E}\left[\langle u^{\nu}(s), \nabla \bar{u}\cdot \bar{u}(s)\rangle\right],
\end{align*}
 \begin{align*}
      \mathbb{E}\left[\langle u^{\nu}(t),v(t)\rangle\right]\leq \lVert v(t)\rVert\mathbb{E}\left[\lVert u^{\nu}(t)\rVert^2\right]^{\frac{1}{2}}\stackrel{energy \ eq.}{\leq}K\lVert v(t)\rVert\stackrel{\lVert v(t)\rVert_{L^2(D)}\leq K\delta^{\frac{1}{2}}}{\leq} K\delta^{\frac{1}{2}}\rightarrow 0, \end{align*}
 \begin{align*}
    \mathbb{E}\left[\int_0^t b(u^{\nu}(s)-\bar{u}(s),\bar{u}(s),u^{\nu}(s)-\bar{u}(s))\ ds\right]\leq \lVert \nabla \bar{u}\rVert_{L^{\infty}(0,T;D)}\mathbb{E}\left[\int_0^t \lVert (u^{\nu}-\bar{u})(s)\rVert^2\ ds\right]. 
\end{align*} 
 Using all these relations in equation (\ref{first estimate first prop}), we get 
 \begin{align*}
     \mathbb{E}\left[\lVert u^{\nu}-\bar{u}\rVert_{H}^2\right]&\leq o(1)+ 2\lVert \bar{u}_0\rVert^2-2\mathbb{E}\left[\lVert u_0^{\nu}\rVert^2\right]+t\nu\sum_{k=1}^N\lVert\sigma_k\rVert^2+\\&  2\nu\mathbb{E}\left[\int_0^t \langle(-A)^{\frac{1}{2}}u^{\nu}(s), (-A)^{\frac{1}{2}}(\bar{u}-v)(s)\rangle\ ds\right]+2\mathbb{E}\left[\int_0^t b(u^{\nu}(s),v(s),u^{\nu}(s)) \ ds\right]+\\ &
    2\mathbb{E}\left[\int_0^t(b(u^{\nu}(s),\bar{u}(s),\bar{u}(s))-b(u^{\nu}(s),\bar{u}(s),u^{\nu}(s)))\ ds\right]\\ &
    \stackrel{b(u^{\nu},\bar{u},u^{\nu})-b(u^{\nu},\bar{u},\bar{u})=b(u^{\nu}-\bar{u},\bar{u},u^{\nu}-\bar{u})}{=} o(1)+t\nu\sum_{k=1}^N \lVert \sigma_k\rVert^2\\ &+2\nu\mathbb{E}\left[\int_0^t \langle(-A)^{\frac{1}{2}}u^{\nu}(s), (-A)^{\frac{1}{2}}(\bar{u}-v)(s)\rangle\ ds\right]\\ & +2\mathbb{E}\left[\int_0^t b(u^{\nu}(s),v(s),u^{\nu}(s)) \ ds\right]-2\mathbb{E}\left[\int_0^t b(u^{\nu}(s)-\bar{u}(s),\bar{u}(s),u^{\nu}(s)-\bar{u}(s))\ ds\right].
\end{align*}  
 
thus, calling $$R(s)=\nu \sum_{k}\lVert \sigma_k\rVert^2+2\nu\mathbb{E}\left[\langle(-A)^{\frac{1}{2}}u^{\nu}(s), (-A)^{\frac{1}{2}}(\bar{u}-v)(s)\rangle\right]+\mathbb{E}\left[b(u^{\nu}(s),v(s),u^{\nu}(s))\right]$$ we have
\begin{align*}
    \mathbb{E}\left[\lVert u^{\nu}-\bar{u}\rVert_{H}^2\right]\leq o(1)+\int_0^t(K\mathbb{E}\left[\lVert(u^{\nu}-\bar{u})(s)\rVert^2\right]+R(s))\ ds.
\end{align*}
If we are able to prove that $\lim_{\nu\rightarrow 0}\int_0^t R(s) \ ds=0 $, then via Gronwall's inequality we'll get the thesis. The term related to $\sigma_k$ is obvious. For what concerns the others:
\begin{align*} 
  \left|\mathbb{E}\left[ b(u^{\nu}(s),v(s),u^{\nu}(s))\right]\right|& \leq \mathbb{E}\left[\int_{\Gamma_{\delta}} \lvert\nabla v\rvert(s)\rho^2  \frac{\lvert u^{\nu}(s)\rvert^2}{\rho^2}\ dx \right]\\ &
    \stackrel{\lVert\rho^2 \nabla v(t)\rVert_{L^{\infty}(D)}\leq K\delta}{\leq} K\delta\mathbb{E}\left[\lVert\frac{u^{\nu}}{\rho}\rVert^2_{L^2(\Gamma_{\delta})} \right]\\ &\stackrel{Hardy-Littlewood \ ineq.}{\leq} K\delta \mathbb{E}\left [ \lVert\nabla u^{\nu}\rVert^2_{L^2(\Gamma_{\delta})}\right],
\end{align*}
\begin{align*}
     \left|\mathbb{E}\left[\nu\langle(-A)^{\frac{1}{2}}u^{\nu}(s), (-A)^{\frac{1}{2}}(\bar{u}-v)(s)\rangle\right]\right| & \leq \nu \mathbb{E}\left[\lVert \nabla u^{\nu}(s)\rVert_{L^2(D)}\lVert\nabla \bar{u}(s)\rVert_{L^2(D)}\right] +\nu \mathbb{E}\left[\lVert \nabla u^{\nu}(s)\rVert_{L^2(\Gamma_{\delta})}\lVert\nabla v(s)\rVert_{L^2(\Gamma_{\delta})}\right]\\
     & \stackrel{\lVert\nabla v(t)\rVert_{L^{2}(D)}\leq K\delta^{-1/2}}{\leq} K\nu \mathbb{E}\left[\lVert\nabla u^{\nu}(s)\rVert_{L^2(D)}\right]+\nu K \delta^{-1/2}\mathbb{E}\left[\lVert\nabla u^{\nu}(s)\rVert_{L^2(\Gamma_{\delta})}\right].
\end{align*}
Taking $\delta=c\nu$, we have 
\begin{align*}
    R(t) \leq  K\nu+K\nu \mathbb{E}[\lVert\nabla u^{\nu}(s)\rVert_{L^2(D)}]+\nu^{\frac{1}{2}} K \mathbb{E}\left[\lVert\nabla u^{\nu}(s)\rVert_{L^2(\Gamma_{\delta})}\right]+K\nu \mathbb{E}\left [ \lVert \nabla u^{\nu}(s)\rVert^2_{L^2(\Gamma_{\delta})}\right]. 
\end{align*}
Exploiting the assumption $$\lim_{\nu\rightarrow 0}\nu\int_0^T \mathbb{E}\left[\lVert\nabla u^{\nu}(t) \rVert^2_{L^2(\Gamma_{c\nu})}\right] \ dt= 0,$$ previous estimates and energy equality, via Holder inequality we get $\lim_{\nu\rightarrow 0}\int_0^t R(s) \ ds=0 $ and then the thesis.
\end{itemize}
\end{proof}
\begin{corollary}\label{Strong result strong assum}
Under the same assumptions of Proposition \ref{weak result strong assump}, if $$\lim_{\nu\rightarrow 0}\nu\int_0^T \mathbb{E}\left[\lVert\nabla u^{\nu}(t) \rVert^2_{L^2(\Gamma_{c\nu})}\right] \ dt= 0, $$ then $$\lim_{\nu \rightarrow 0}\mathbb{E}\left[\sup_{t\in [0,T]}\lVert u^{\nu}-\bar{u}\rVert^2\right]= 0. $$
\end{corollary}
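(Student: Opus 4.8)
The plan is to upgrade Proposition \ref{weak result strong assump} from $\sup_t\mathbb{E}$ to $\mathbb{E}\sup_t$; the price is that the martingale term, which simply disappeared once the expectation was taken in the Proposition, must now be carried along pathwise and controlled under the supremum. Write $w^\nu=u^\nu-\bar u$. Since $\bar u-v\in C([0,T];V)\cap C^1([0,T];H)$ by the corrector proposition above, I would combine the Itô formula \eqref{Ito} for $\lVert u^\nu(t)\rVert^2$ with Lemma \ref{Weak solution impl} applied to $\phi=\bar u-v$, together with the energy conservation $\lVert\bar u(t)\rVert=\lVert\bar u_0\rVert$ of the classical Euler solution, to produce a pathwise identity
$$\lVert w^\nu(t)\rVert^2=D^\nu(t)+N^\nu(t),\qquad N^\nu(t)=2\nu^{1/2}\sum_{k=1}^N\int_0^t\big\langle u^\nu(s)-(\bar u-v)(s),\sigma_k\big\rangle\,dW^k_s,$$
where $N^\nu$ is a continuous martingale and $D^\nu$ gathers all remaining contributions: the initial data, the term $t\nu\sum_k\lVert\sigma_k\rVert^2$, the favourable dissipation $-2\nu\int_0^t\lVert\nabla u^\nu\rVert^2\,ds\le0$, the viscous cross term, the corrector terms $\langle u^\nu,v\rangle$, $\langle u^\nu,\partial_s v\rangle$, and the trilinear terms. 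These are precisely the objects already estimated in the proof of Proposition \ref{weak result strong assump}. Note that $\langle u^\nu,v\rangle$ is kept inside $D^\nu$ rather than expanded, since $v\notin V$ and Lemma \ref{Weak solution impl} does not apply to it.

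Taking the supremum over $t\in[0,\tau]$ and then the expectation gives $\mathbb{E}[\sup_{t\le\tau}\lVert w^\nu\rVert^2]\le\mathbb{E}[\sup_{t\le\tau}\lvert D^\nu(t)\rvert]+\mathbb{E}[\sup_{t\le\tau}\lvert N^\nu(t)\rvert]$. The new step is the martingale, for which I would invoke the Burkholder--Davis--Gundy inequality:
$$\mathbb{E}\Big[\sup_{t\le\tau}\lvert N^\nu(t)\rvert\Big]\le K\,\mathbb{E}\Big[\Big(4\nu\sum_{k=1}^N\int_0^T\langle u^\nu-(\bar u-v),\sigma_k\rangle^2\,ds\Big)^{1/2}\Big]\le K\nu^{1/2}\Big(\mathbb{E}\Big[\int_0^T\lVert u^\nu-(\bar u-v)\rVert^2\,ds\Big]\Big)^{1/2}.$$
Since $\mathbb{E}[\lVert u^\nu(s)\rVert^2]$ is bounded uniformly in $\nu$ by \eqref{energy eq} and $\lVert(\bar u-v)(s)\rVert\le K$, the right-hand side is $O(\nu^{1/2})=o(1)$.

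For the drift I would discard the non-positive dissipation term and estimate $\sup_{t\le\tau}$ of each surviving term of $D^\nu$ exactly as the quantity $R(s)$ was estimated in Proposition \ref{weak result strong assump}: the boundary-layer trilinear term is bounded by $K\delta\,\mathbb{E}[\lVert\nabla u^\nu\rVert^2_{L^2(\Gamma_\delta)}]$ via the Hardy--Littlewood inequality; the viscous cross term splits into a full-domain piece carrying a factor $\nu$ (hence $o(1)$ after Cauchy--Schwarz and $\nu\,\mathbb{E}[\int_0^T\lVert\nabla u^\nu\rVert^2_{L^2(D)}\,ds]\le K$) and a piece supported in $\Gamma_\delta$ carrying $\nu^{1/2}$; the corrector terms and initial data are $o(1)$ through $\lVert v\rVert,\lVert\partial_t v\rVert\le K\delta^{1/2}$ and the uniform bound \eqref{uniform energy}. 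With $\delta=c\nu$ the hypothesis $\nu\int_0^T\mathbb{E}[\lVert\nabla u^\nu\rVert^2_{L^2(\Gamma_{c\nu})}]\,dt\to0$ forces all of these to vanish. The only term I would keep is the trilinear one, which after the same manipulation as in the Proposition (using the Euler equation for $\partial_s\bar u$, $\langle u^\nu,\nabla p\rangle=0$, and $b(u^\nu,\bar u,u^\nu)-b(u^\nu,\bar u,\bar u)=b(w^\nu,\bar u,w^\nu)$) contributes at most $K\int_0^\tau\lVert w^\nu(s)\rVert^2\,ds$, since $\lvert b(w^\nu,\bar u,w^\nu)\rvert\le\lVert\nabla\bar u\rVert_{L^\infty}\lVert w^\nu\rVert^2$.

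Collecting the estimates and writing $g(\tau)=\mathbb{E}[\sup_{t\in[0,\tau]}\lVert w^\nu(t)\rVert^2]$, which is finite by \eqref{uniform energy}, I obtain $g(\tau)\le o(1)+K\int_0^\tau\mathbb{E}[\lVert w^\nu(s)\rVert^2]\,ds\le o(1)+K\int_0^\tau g(s)\,ds$, where the $o(1)$ is uniform in $\tau\in[0,T]$ and vanishes as $\nu\to0$. Gronwall's inequality then yields $g(T)\le o(1)\,e^{KT}\to0$, which is the assertion. I expect the main obstacle to be precisely the control of $\mathbb{E}[\sup_t\lvert N^\nu(t)\rvert]$: once the supremum sits inside the expectation the in-expectation energy equality of the Proposition is unavailable, so one must keep the stochastic integral explicit and rely on BDG together with the $\nu^{1/2}$ prefactor to absorb it, while checking that the pathwise viscous cross term is still killed by the boundary-layer hypothesis rather than by the dissipation.
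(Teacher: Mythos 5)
Your proposal is correct and follows essentially the same route as the paper: both decompose $\lVert u^{\nu}-\bar{u}\rVert^2$ via the pathwise It\^o formula and the weak formulation tested against $\bar{u}-v$, keep $\langle u^{\nu},v\rangle$ unexpanded, and reuse the boundary-layer estimates from Proposition \ref{weak result strong assump}. The only (harmless) differences are cosmetic: you merge the noise contributions into a single stochastic integral controlled by Burkholder--Davis--Gundy, where the paper applies Doob's inequality to the It\^o integral and bounds the residual $\langle\sigma_k,(\bar{u}-v)(t)\rangle W^k_t$ terms via $\mathbb{E}\left[\sup_{t}\lvert W^k_t\rvert\right]$, and you close with a Gronwall argument on $\mathbb{E}\left[\sup_{t\le\tau}\lVert w^{\nu}\rVert^2\right]$ where the paper simply invokes the already-proved convergence of $\sup_{t}\mathbb{E}\left[\lVert u^{\nu}-\bar{u}\rVert^2\right]$ to absorb the term $K\,\mathbb{E}\left[\int_0^T\lVert u^{\nu}-\bar{u}\rVert^2\,ds\right]$.
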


\begin{proof}
Preliminarily, note that, starting from equation (\ref{uniform energy}), we have
\begin{align*}
  \mathbb{E}\left[sup_{t\in [0,T]}\lVert u^{\nu}(t)\rVert^2\right]& \leq \mathbb{E}\left[\lVert u_0^{\nu}\rVert^2\right]+T\nu^{\frac{1}{2}} \sum_{k=1}^N \lVert\sigma_k\rVert^2+K\nu^{\frac{1}{2}} \sum_{k=1}^N\mathbb{E}\left[\int_0^T\langle u^{\nu}(s),\sigma_k\rangle^2 \ ds\right]^{\frac{1}{2}} \\ & \leq
  \mathbb{E}\left[\lVert u_0^{\nu}\rVert^2\right]+T\nu^{\frac{1}{2}} \sum_{k=1}^N \lVert\sigma_k\rVert^2+K\nu^{\frac{1}{2}}\sum_{k=1}^N\lVert\sigma_k\rVert\mathbb{E}\left[\int_0^T\lVert u^{\nu}(s)\rVert^2\ ds\right]^{\frac{1}{2}}\\
  & \leq  \mathbb{E}\left[\lVert u_0^{\nu}\rVert^2\right]+T\nu^{\frac{1}{2}} \sum_{k=1}^N \lVert\sigma_k\rVert^2+K\nu^{\frac{1}{2}}\sum_{k=1}^N\lVert\sigma_k\rVert\left(\int_0^T \mathbb{E}\left[\lVert u_0^{\nu}\rVert^2\right]+s\nu \sum_{j=1}^N \lVert\sigma_j\rVert^2 \ ds\right)^{\frac{1}{2}}\\
  & \leq K<+\infty.
\end{align*}
Now the proof is similar to the previous one. For each time $t$ we have
\begin{align*}
    \lVert u^{\nu}-\bar{u}\rVert^2 & =\lVert u^{\nu}\rVert^2+\lVert \bar{u}\rVert^2-2\langle u^{\nu},\bar{u}\rangle\\
    & \stackrel{It\hat{o}\  formula}{\leq}\lVert u_0^{\nu}\rVert^2+t\nu\sum_{k=1}^N\lVert\sigma_k\rVert^2+2\sum_{k=1}^N\nu^{\frac{1}{2}}\int_0^t\langle u^{\nu}(s),\sigma_k\rangle dW^k_s+\lVert \bar{u}_0\rVert^2-2\langle u^{\nu},\bar{u}\rangle \\ & =\lVert u_0^{\nu}\rVert^2+t\nu\sum_{k=1}^N\lVert\sigma_k\rVert^2+2\sum_{k=1}^N\nu^{\frac{1}{2}}\int_0^t\langle u^{\nu}(s),\sigma_k\rangle dW^k_s+\lVert \bar{u}_0\rVert^2-2\langle u^{\nu},\bar{u}-v\rangle_H-2\langle u^{\nu},v\rangle.
\end{align*}
Let us rewrite $\langle u^{\nu},\bar{u}-v\rangle$ thanks to the weak formulation of $u^{\nu}$
\begin{align*}
    -2\langle u^{\nu},\bar{u}-v\rangle & =-2\langle u^{\nu}_0,(\bar{u}-v)(0)\rangle-2\int_0^t\langle u^{\nu}(s),\partial_s(\bar{u}-v)(s)\rangle\ ds\\ &+ 2\nu\int_0^t\langle(-A)^{\frac{1}{2}}u^{\nu}(s),(-A)^{\frac{1}{2}}(\bar{u}-v)(s)\rangle\ ds-2\int_0^t b(u^{\nu},\bar{u}-v,u^{\nu})(s)\ ds\\
    & -2\nu^{\frac{1}{2}}\sum_{k=1}^N\langle\sigma_k,(\bar{u}-v)\rangle W^k_t+2\nu^{\frac{1}{2}}\sum_{k=1}^N\int_0^t\langle\sigma_k,(\bar{u}-v)(s)\rangle W^k_s\ ds.
\end{align*}
Moreover, thanks to previous relation and  \begin{align*}
-2\langle u^{\nu}(s),\partial_s(\bar{u}-v)(s)\rangle=2\langle u^{\nu}(s),\partial_s v(s)\rangle+2\langle u^{\nu}(s),\nabla \bar{u}\cdot \bar{u}(s)\rangle,
\end{align*}
\begin{align*}
  b(u^{\nu},\bar{u},u^{\nu})-b(u^{\nu},\bar{u},\bar{u})=b(u^{\nu}-\bar{u},\bar{u},u^{\nu}-\bar{u}),  
\end{align*}
 we have at time $t$ 
\begin{align*}
    \lVert u^{\nu}-\bar{u}\rVert^2& =\left(\lVert u_0^{\nu}\rVert^2+\lVert \bar{u}_0\rVert^2-2\langle u^{\nu}_0,(\bar{u}-v)(0)\rangle \right)+(t\nu\sum_{k=1}^N\lVert\sigma_k\rVert^2+2\nu^{\frac{1}{2}}\sum_{k=1}^N\int_0^t\langle u^{\nu}(s),\sigma_k\rangle dW^k_s\\ &-2\nu^{\frac{1}{2}}\sum_{k=1}^N\langle\sigma_k,(\bar{u}-v)(t)\rangle W^k_t+2\nu^{\frac{1}{2}}\sum_{k=1}^N\int_0^t \langle\sigma_k,(\bar{u}-v)(s)\rangle W^k_s \ ds)\\ &+\left(2\int_0^t b(u^{\nu},v,u^{\nu})(s) \ ds-2\int_0^t b(u^{\nu}-\bar{u},\bar{u},u^{\nu}-\bar{u})(s)\ ds\right)+\\ & (-2\langle u^{\nu},v\rangle+2\nu \int_0^t\langle(-A)^{\frac{1}{2}}u^{\nu}(s),(-A)^{\frac{1}{2}}(\bar{u}-v)(s)\rangle\ ds+2\int_0^t \langle u^{\nu},\partial_s v\rangle\ ds)\\ &= I_1(t)+I_2(t)+I_3(t)+I_4(t).
\end{align*}
Thus $$\mathbb{E}\left[ sup_{t\in [0,T]}\lVert u^{\nu}-u\rVert^2\right]\leq \mathbb{E}[sup_{t\in [0,T]}I_1]+\mathbb{E}[sup_{t\in [0,T]}I_2]+\mathbb{E}[sup_{t\in [0,T]}I_3]+\mathbb{E}[sup_{t\in [0,T]}I_4]$$
\begin{align*}
    E\left[\sup_{t\in[0,T]} I_1\right]=& \mathbb{E}\left[\lVert u_0^{\nu}\rVert^2+\lVert \bar{u}_0\rVert^2-2\langle u^{\nu}_0,(\bar{u}-v)(0)\rangle\right]\\ \leq & -\mathbb{E}\left[\lVert u_0^{\nu}\rVert^2\right]+\lVert \bar{u}_0\rVert^2+2\mathbb{E}\left[\lVert u_0^{\nu}\rVert\lVert u_0^{\nu}-\bar{u}_0\rVert\right]+2\mathbb{E}\left[\lVert\bar{u}_0^{\nu}\rvert\lVert v(0)\rVert\right]\\
    & \stackrel{\lVert v(t)\lVert_{L^2(D)}\leq K\delta^{\frac{1}{2}},\ \mathbb{E}\left[\lVert u_0^{\nu}-\bar{u}_0\rVert^2\right]\rightarrow 0}{=}o(1)+K\delta^{\frac{1}{2}}.
\end{align*}
The analysis of $I_3$ is similar to what we have done in the previous proposition, hence some details have been omitted.
\begin{align*}
    \mathbb{E}\left[sup_{t\in [0,T]} I_3\right] &\leq 2 \mathbb{E}\left[sup_{t\in [0,T]}\int_0^t b(u^{\nu},v,u^{\nu})(s)\ ds\right]+2 \mathbb{E}\left[sup_{t\in[0,T]}\int_0^t b(u^{\nu}-\bar{u},\bar{u},u^{\nu}-\bar{u})(s)\ ds\right]\\
    &\leq 2 \mathbb{E}\left[\int_0^T|b(u^{\nu},v,u^{\nu})(s)|\ ds\right]+2 \mathbb{E}\left[\int_0^T|b(u^{\nu}-\bar{u},\bar{u},u^{\nu}-\bar{u})(s)|\ ds\right]\\
    & \leq\mathbb{E}\left[\int_0^T \lVert\rho^2\nabla v(s)\rVert_{L^{\infty}(D)}\lVert u^{\nu}\rho^{-1}(s)\rVert_{L^2(D)}^2\right]+\mathbb{E}\left[\int_0^T\lVert u^{\nu}-\bar{u}\rVert^2(s)\lVert \bar{u}(s)\rVert_{L^{\infty}(D)}\ ds\right]\\ &\leq 2K\delta \mathbb{E}\left[\int_0^T\lVert\nabla u^{\nu}(s)\rVert_{L^2(\Gamma_{\delta})}^2\ ds\right]+K\mathbb{E}\left[\int_0^T\lVert u^{\nu}-\bar{u}\rVert^2(s)\ ds \right].
\end{align*}
The last term goes to 0 thanks to previous proposition, the first term goes to $0$ thanks to the assumptions choosing $\delta$ properly. More details will follow. \\ 
Let us analyze all the elements of $I_2$ exploiting previous energy equalities and properties of Brownian motion:
\begin{align*}
    \mathbb{E}\left[\sup_{t\in[0,T]} T\nu \sum_{k=1}^N \lVert\sigma_k\rVert^2\right]\leq K\nu
\end{align*}
\begin{align*}
    \mathbb{E}\left[sup_{t\in [0,T]} 2\sum_{k=1}^N \nu^{\frac{1}{2}}\int_0^t \langle u^{\nu},\sigma_k\rangle dW^k_s\right]\stackrel{Doob's\  ineq}{\leq} K\nu^{\frac{1}{2}}
\end{align*}
\begin{align*}
    \mathbb{E}\left[sup_{t\in [0,T]} 2\nu^{\frac{1}{2}}\sum_{k=1}^N\langle\sigma_k,\bar{u}-v(t)\rangle W^k_t\right]\leq K\nu^{\frac{1}{2}}\mathbb{E}\left[sup_{t\in [0,T]}|W^k_t|\right]\leq K\nu^{\frac{1}{2}}
\end{align*}
\begin{align*}
    \mathbb{E}\left[\sup_{t\in[0,T]} 2\nu^{\frac{1}{2}}\sum_{k=1}^N \int_0^t \langle\sigma_k,\bar{u}-v(s)\rangle W^k_s\ ds\right]\leq K\nu^{\frac{1}{2}}.
\end{align*}
It remains only to analyze $I_4$. Some of the estimates below use tricks already presented, hence some details have been omitted.
\begin{align*}
    2\nu \mathbb{E}\left[\sup_{t\in [0,T]}\int_0^t \langle(-A)^{\frac{1}{2}}u^{\nu},(-A)^{\frac{1}{2}}(\bar{u}-v)\rangle\ ds\right]\leq 2\nu \mathbb{E}\left[\int_0^T K\lVert \nabla u^{\nu}(s)\rVert_{L^2(D)}+K\delta^{-\frac{1}{2}}\lVert\nabla u^{\nu}(s)\rVert_{L^2(\Gamma_{\delta})}\ ds \right]
\end{align*}
\begin{align*}
    \mathbb{E}\left[\sup_{t\in[0,T]} \langle u^{\nu},v\rangle\right]\leq \mathbb{E}\left[\sup_{t\in[0,T]} \lVert u^{\nu}\rVert\sup_{t\in[0,T]}\lVert v\rVert \right]\leq \delta^{\frac{1}{2}}\mathbb{E}\left[\sup_{t\in[0,T]} \lVert u^{\nu}\rVert^2\right]^{\frac{1}{2}}\leq K\delta^{\frac{1}{2}}
\end{align*}
\begin{align*}
    \mathbb{E}\left[sup_{t\in[0,T]} \int_0^t \langle u^{\nu},\partial_s v\rangle\ ds  \right]\leq \lVert \partial_s v\rVert_{L^{\infty}(0,T;L^2(D))}\mathbb{E}\left[\int_0^T\lVert u^{\nu}(s)\rvert\ ds \right]\leq K \delta^{\frac{1}{2}}.
\end{align*}
In conclusion, if we take $\delta=c\nu$, then \begin{align*}
    \mathbb{E}\left[\sup_{t\in [0,T]}\lVert u^{\nu}-u\rVert^2\right] & \leq o(1)+K\nu^{\frac{1}{2}}+2K\nu \mathbb{E}\left[\int_0^T\lVert\nabla u^{\nu}(s)\rVert_{L^2(\Gamma_{c\nu})}^2\ ds\right]+K\mathbb{E}\left[\int_0^T\lVert u^{\nu}-\bar{u}(s)\rVert^2\ ds \right]\\ &+K\nu +\nu K \mathbb{E}\left[\int_0^T \lVert \nabla u^{\nu}(s)\rVert_{L^2(D)}\ ds\right]+K\nu^{\frac{1}{2}}\mathbb{E}\left[\int_0^T \lVert\nabla u^{\nu}\rVert_{L^2(\Gamma_{c\nu})}\ ds \right].
\end{align*}
It is clear the almost all the terms goes to $0$ thanks to the assumptions and Proposition \ref{weak result strong assump}. We need just to check that $\nu  \mathbb{E}\left[\int_0^T \lVert \nabla u^{\nu}(s)\rVert_{L^2(D)}\ ds\right]$ and $\nu^{\frac{1}{2}}\mathbb{E}\left[\int_0^T \lVert\nabla u^{\nu}\rVert_{L^2(\Gamma_{c\nu})}\ ds \right]$ behave properly, but this is elementary, in fact:
\begin{align*}
\nu  \mathbb{E}\left[\int_0^T \lVert\nabla u^{\nu}(s)\rVert_{L^2(D)}\ ds\right]\leq \nu T\mathbb{E}\left[\int_0^T \lVert\nabla u^{\nu}(s)\rVert_{L^2(D)}^2 ds\right]^{\frac{1}{2}}=\nu^{\frac{1}{2}}T\mathbb{E}\left[\nu\int_0^T \lVert \nabla u^{\nu}(s)\rVert_{L^2(D)}^2 ds\right]^{\frac{1}{2}}\leq K\nu^{\frac{1}{2}}    
\end{align*}
\begin{align*}
\nu^{\frac{1}{2}}\mathbb{E}\left[\int_0^T \lVert\nabla u^{\nu}\rVert_{L^2(\Gamma_{c\nu})}\ ds \right]\leq K\mathbb{E}\left[\nu \int_0^T\lVert \nabla u^{\nu}\rVert_{L^2(\Gamma_{c\nu})}^2\ ds\right]^{\frac{1}{2}}\stackrel{\nu\rightarrow 0}{\rightarrow} 0.   
\end{align*}
This completes the proof.
\end{proof}\\
Theorem \ref{Thm strong strong} follows immediately by Proposition \ref{weak result strong assump} and Corollary \ref{Strong result strong assum}.
\section{Proof of Theorem \ref{Theorem weak strong}} \label{Sec Theorem weak strong}
As in the previous section, we start with a weaker result with the supremum in time outside the expected value to obtain the stronger one with the supremum in time inside the expected value. The idea behind both the proofs is simply to introduce an approximation of $u_0$ in the sense of Theorem \ref{P.L. Lions}, then $$\lVert u^n-u\rVert^2\leq 2\lVert u^n-\bar{u}^m\rVert^2 +2\lVert \bar{u}^m-u\rVert^2,  $$ where $\bar{u}^m$ is the solution of the Euler Equations with initial condition $\bar{u}^m_0\in H\cap C^{1+\epsilon}(\bar{D})$. Thus, the second term can be estimate via Theorem \ref{P.L. Lions}, the first one is analyzed exploiting techniques similar to the ones of the previous section.
\begin{remark}\label{Remark dis}
If $u_0\in \Tilde{O}$ and $\{\bar{u}^m_0\}_{m\in\mathbb{N}}$ approximates $u_0$ in the sense of Theorem \ref{P.L. Lions}, then $$\lVert u_0-\bar{u}_0^m\rVert e^{2T\lVert\nabla \bar{u}^m\rVert_{L^{\infty}([0,T]\times D)}}\leq\frac{1}{m}e^{-T\lVert\nabla \bar{u}^m\rVert_{L^{\infty}([0,T]\times D)}}$$
$$ \lVert u_0-\bar{u}_0^m\rVert e^{2T\lVert\nabla \bar{u}^m\rVert_{L^{\infty}([0,T]\times D)}}T\lVert\nabla\bar{u}^m\rVert_{L^{\infty}([0,T]\times D)}\leq \frac{1}{m}.$$
\end{remark}
\begin{lemma}\label{Lemma weak hp weak th}
Under the same assumptions of Theorem \ref{Theorem weak strong}
$$\lim_{n \rightarrow +\infty}\sup_{t\in [0,T]}\mathbb{E}\left[\lVert u^{n}-u\rVert^2\right]= 0. $$
\end{lemma}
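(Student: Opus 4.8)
The plan is to reduce the statement to two comparisons, exactly along the lines announced before the lemma. Fix $m\in\mathbb{N}$ and let $\bar{u}^m$ be the solution of the Euler equations with initial datum $\bar{u}_0^m\in H\cap C^{1,\epsilon}(\bar{D})$ approximating $u_0$ in the sense of Theorem \ref{P.L. Lions}, with boundary-layer corrector $v^m$ attached to $\bar{u}^m$. Writing $\lVert u^n-u\rVert^2\le 2\lVert u^n-\bar{u}^m\rVert^2+2\lVert\bar{u}^m-u\rVert^2$, the second term is deterministic and, by Theorem \ref{P.L. Lions} together with the defining inequality of the approximation, is bounded by $\frac{1}{m^2}e^{-4T\lVert\nabla\bar{u}^m\rVert_{L^{\infty}([0,T]\times D)}}\le\frac{1}{m^2}$, uniformly in $t$. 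Only the first term requires work.

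For the first term I would rerun, almost verbatim, the estimate of Proposition \ref{weak result strong assump}, with $u^{\nu}$ replaced by $u^n$, $\bar{u}$ by $\bar{u}^m$, $v$ by $v^m$, and $\nu$ by $\nu_n$. The single structural difference is the initial-data contribution: since now $u_0^n\to u_0\neq\bar{u}_0^m$, the quantity $2\mathbb{E}[\lVert u_0^n\rVert^2]-2\mathbb{E}[\langle u_0^n,\bar{u}_0^m\rangle]=2\mathbb{E}[\langle u_0^n,u_0^n-\bar{u}_0^m\rangle]$ no longer vanishes, but by the $L^2(\Omega;H)$ convergence $u_0^n\to u_0$ it tends, as $n\to\infty$, to $2\langle u_0,u_0-\bar{u}_0^m\rangle$, which is bounded by $C\lVert u_0-\bar{u}_0^m\rVert$ with $C$ depending only on $\lVert u_0\rVert$. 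Every boundary-layer remainder term — the viscous term $2\nu_n\langle(-A)^{1/2}u^n,(-A)^{1/2}(\bar{u}^m-v^m)\rangle$ and the trilinear term $b(u^n,v^m,u^n)$ — is controlled as in Proposition \ref{weak result strong assump} with $\delta=c\nu_n$, and for fixed $m$ its time integral vanishes as $n\to\infty$ thanks to the standing hypothesis $\nu_n\int_0^T\mathbb{E}[\lVert\nabla u^n\rVert^2_{L^2(\Gamma_{c\nu_n})}]\,dt\to0$, the $m$-dependence entering only through the (fixed, harmless) constants attached to $\bar{u}^m$. A Gronwall argument with constant $2\lVert\nabla\bar{u}^m\rVert_{L^{\infty}([0,T]\times D)}$ then gives
\[
\limsup_{n\to\infty}\sup_{t\in[0,T]}\mathbb{E}\left[\lVert (u^n-\bar{u}^m)(t)\rVert^2\right]\le C\lVert u_0-\bar{u}_0^m\rVert\,e^{2T\lVert\nabla\bar{u}^m\rVert_{L^{\infty}([0,T]\times D)}}.
\]

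Combining the two bounds yields, for every $m$,
\[
\limsup_{n\to\infty}\sup_{t\in[0,T]}\mathbb{E}\left[\lVert u^n-u\rVert^2\right]\le 2C\lVert u_0-\bar{u}_0^m\rVert\,e^{2T\lVert\nabla\bar{u}^m\rVert_{L^{\infty}([0,T]\times D)}}+\frac{2}{m^2},
\]
and here Remark \ref{Remark dis} is decisive: the defining inequality of the approximation makes $\lVert u_0-\bar{u}_0^m\rVert\,e^{2T\lVert\nabla\bar{u}^m\rVert_{L^{\infty}([0,T]\times D)}}\le\frac{1}{m}$, so the right-hand side is at most $2C/m+2/m^2$. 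Since the left-hand side does not depend on $m$, letting $m\to\infty$ forces it to vanish, which is the claim.

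The main obstacle is precisely this double limit. The Gronwall factor $e^{2T\lVert\nabla\bar{u}^m\rVert_{L^{\infty}}}$ diverges as $m\to\infty$, because $\bar{u}_0^m$ approximates a datum $u_0$ that is merely in $H$ and whose Lipschitz seminorm is uncontrolled; it must be defeated by the smallness of $\lVert u_0-\bar{u}_0^m\rVert$. This balancing is exactly the content of the approximation introduced after Theorem \ref{P.L. Lions}: the exponent $-3T\lVert\nabla\bar{u}^m\rVert_{L^{\infty}}$ was built to absorb one factor $e^{2T\lVert\nabla\bar{u}^m\rVert_{L^{\infty}}}$ coming from the comparison of $u^n$ with $\bar{u}^m$ and still leave a decaying remainder. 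The order of the limits is therefore essential — first $n\to\infty$ at $m$ fixed, so that the Kato hypothesis and the $m$-dependent constants may be used freely, and only afterwards $m\to\infty$.
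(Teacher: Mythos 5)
Your proposal follows essentially the same route as the paper: the splitting $\lVert u^n-u\rVert^2\le 2\lVert u^n-\bar{u}^m\rVert^2+2\lVert\bar{u}^m-u\rVert^2$ with the second term controlled by Theorem \ref{P.L. Lions}, the first term estimated by rerunning the Kato-type argument of Proposition \ref{weak result strong assump} with $m$-dependent corrector constants and Gronwall, the limit taken first in $n$ at fixed $m$, and Remark \ref{Remark dis} used to beat the Gronwall factor $e^{2T\lVert\nabla\bar{u}^m\rVert_{L^{\infty}([0,T]\times D)}}$. This is precisely the paper's proof, including the identification of the surviving initial-data term of order $\lVert u_0-\bar{u}_0^m\rVert$ as the only structural novelty.
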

\begin{proof}
Let $\{\bar{u}_0^m\}_{m\in\mathbb{N}}$ approximating $u_0$ in the sense of Theorem \ref{P.L. Lions} and $\{\bar{u}_m\}_{m\in\mathbb{N}}$ the corresponding solutions of the Euler equations, then for each $t,\ n,\ m$ we have
\begin{align*}
    \mathbb{E}\left[\lVert u^n(t)-u(t)\rVert^2\right]& \leq 2 \mathbb{E}\left[\lVert u^n(t)-\bar{u}^m(t)\rVert^2\right]+2\lVert \bar{u}^m(t)-u(t)\rVert^2\\ &\stackrel{Thm\ \ref{P.L. Lions}}{\leq}\frac{2}{m^2}+2 \mathbb{E}\left[\lVert u^n(t)-\bar{u}^m(t)\rVert^2\right].
\end{align*}
We adapt the computations of the proof of Proposition \ref{weak result strong assump} to analyze the second term, hence some explanation will be omitted. For each $m$ and $\delta>0$ fixed, let us introduce the corrector of the boundary layer $v_m$. $v_m$ satisfies previous estimates with respect to a constant dependent from $m$ and independent from $t$, namely
$$\lVert v_m(t)\rVert_{L^{\infty}(D)}\leq K_m, \ \lVert v_m(t)\rVert_{L^2(D)}\leq K_m\delta^{\frac{1}{2}}, \ \lVert\partial_t v_m(t)\rVert_{L^2(D)}\leq K_m\delta^{\frac{1}{2}},$$  $$\lVert\nabla v_m(t)\rVert_{L^{\infty}(D)}\leq K_m\delta^{-1}, 
\  \lVert\nabla v_m(t)\rVert_{L^2(D)}\leq K_m\delta^{-1/2},  \  \lVert\rho(t)\nabla v_m(t)\rVert_{L^{\infty}(D)}\leq K_m,$$ $$  \lVert\rho(t)^2 \nabla v_m(t)\rVert_{L^{\infty}(D)}\leq K_m\delta, \  \lVert\rho(t) \nabla v_m(t)\rVert_{L^2(D)}\leq K_m\delta^{\frac{1}{2}}.$$

We have at time $t$
\begin{align*}
  \mathbb{E}\left[\lVert u^n-\bar{u}^m\rVert^2\right]&= \mathbb{E}\left[\lVert u^{n}\rVert^2\right]+\lVert\bar{u}^m\rVert^2-2\mathbb{E}\left[\langle u^{n},\bar{u}^m\rangle\right]\\ & \stackrel{energy \ eq.}{\leq}  \mathbb{E}\left[\lVert u_0^{n}\rVert^2\right]+t\nu_n\sum_{k}\lVert \sigma_k\rVert^2+\lVert\bar{u}^m_0\rVert^2-2\mathbb{E}\left[\langle u^{n},\bar{u}^m-v_m\rangle\right]+2\mathbb{E}\left[\langle u^{n},v_m\rangle\right] \\ & =
    \mathbb{E}\left[\lVert u_0^{n}-u_0\rVert^2\right]+\lVert u_0\rVert^2+2\mathbb{E}\left[\langle u_0^n-u_0,u_0\rangle\right]+t\nu_n\sum_{k}\lVert \sigma_k\rVert^2+\lVert\bar{u}^m_0-u_0\rVert^2+\lVert u_0\rVert^2\\ &+2\langle\bar{u}_0^m-u_0,u_0\rangle-2\mathbb{E}\left[\langle u^{n},\bar{u}^m-v_m\rangle\right]+2\mathbb{E}\left[\langle u^{n},v_m\rangle\right]
    \\ & \leq \mathbb{E}\left[\lVert u_0^{n}-u_0\rVert ^2\right]+2\lVert u_0\rVert^2+2\mathbb{E}\left[\lVert u_0^n-u_0\rVert^2\right]^{1/2}\lVert u_0\rVert+\lVert\bar{u}_0^m-u_0\rVert^2+2\lVert\bar{u}_0^m-u_0\rVert\lVert u_0\rVert\\ &+t\nu_n\sum_{k}\lVert \sigma_k\rVert^2-2\mathbb{E}\left[\langle u^{n},\bar{u}^m-v_m\rangle\right]+2\mathbb{E}\left[\langle u^{n},v_m\rangle\right]
    \\ & \stackrel{\lVert v_m\rVert_{L^2(D)}\leq K_m\delta^{\frac{1}{2}}}{\leq} \mathbb{E}\left[\lVert u_0^{n}-u_0\rVert^2\right]+2\lVert u_0\rVert^2+K\mathbb{E}\left[\lVert u_0^n-u_0\rVert^2\right]^{1/2}+\lVert\bar{u}_0^m-u_0\rVert^2+K\lVert\bar{u}_0^m-u_0\rVert\\ &+K\nu_n-2\mathbb{E}\left[\langle u^{n},\bar{u}^m-v_m\rangle\right]+K_m\delta^{\frac{1}{2}}.
\end{align*}
To analyze the second-last term we use the weak formulation of $u^{n}$, taking $\bar{u}^m-v_m$ as test function. We take directly the expected value of the weak formulation. Exploiting the relation $$\mathbb{E}\left[\langle u_0^n,\bar{u}^m_0\rangle\right]=\lVert u_0\rVert^2+\mathbb{E}\left[\langle u_0^n-u_0,\bar{u}_0^m-u_0\rangle\right]+\mathbb{E}\left[\langle u_0^n-u_0,u_0\rangle\right]+\langle u_0,\bar{u}_0^m-u_0\rangle, $$
we get

\begin{align*} -2\mathbb{E}\left[\langle u^{n}(t),(u^m-v_m)(t)\rangle\right]+2\mathbb{E}\left[\lVert u_0\rVert^2\right] & = -2\mathbb{E}\left[\langle u_0^n-u_0,\bar{u}_0^m-u_0\rangle\right]-2\mathbb{E}\left[\langle u_0^n-u_0,u_0\rangle\right]\\ & -2\langle u_0,\bar{u}_0^m-u_0\rangle+2\mathbb{E}\left[\langle u_0^n,v_m(0)\rangle\right]\\ &  -2\mathbb{E}\left[\int_0^t\langle u^{n}(s),\partial_s(\bar{u}^m-v_m)(s)\rangle\ ds\right]\\ & +  2\nu_n\mathbb{E}\left[\int_0^t \langle (-A)^{\frac{1}{2}}u^{n}(s), (-A)^{\frac{1}{2}}(\bar{u}^m-v_m)(s)\rangle\ ds\right] \\ &- \mathbb{E}\left[\int_0^t 2b(u^{n}(s),(\bar{u}^m-v_m)(s),u^{n}(s))\ ds\right]\\ & \stackrel{\lVert v_m\rVert_{L^2(D)}\leq K_m\delta^{\frac{1}{2}}}{\leq}  2\lVert \bar{u}_0^m-u_0\rVert \mathbb{E}\left[\lVert u_0^n-u_0\rVert^2\right]^{1/2}\\ &+2\lVert u_0\rVert\mathbb{E}\left[\lVert u_0^n-u_0\rVert^2\right]^{1/2} +2\lVert u_0\rVert\lVert\bar{u}_0^m-u_0\rVert+2\mathbb{E}\left[\lVert u_0^n\rVert^2\right]^{1/2}K_m\delta^{\frac{1}{2}}\\ &  -2\mathbb{E}\left[\int_0^t\langle u^{n}(s),\partial_s(\bar{u}^m-v_m)(s)\rangle\ ds\right]\\ & +  2\nu_n\mathbb{E}\left[\int_0^t \langle (-A)^{\frac{1}{2}}u^{n}(s), (-A)^{\frac{1}{2}}(\bar{u}^m-v_m)(s)\rangle\ ds\right] \\ &- \mathbb{E}\left[\int_0^t 2b(u^{n}(s),(\bar{u}^m-v_m)(s),u^{n}(s))\ ds \right] .
\end{align*}
Moreover
\begin{align*}
    -\mathbb{E}\left[\langle u^{n}(s),\partial_s(\bar{u}^m-v_m)(s)\rangle\right] & \stackrel{energy \ eq., \ \lVert\partial_t v_m(t)\rVert_{L^2(D)}\leq K_m\delta^{\frac{1}{2}}}{=}-\mathbb{E}\left[\langle u^{n}(s),\partial_s \bar{u}^m(s)\rangle\right]+K_m\delta^{\frac{1}{2}} \\ & \stackrel{Euler\  eq}{=} \mathbb{E}\left[\langle u^{n}(s),\nabla\bar{u}^m(s)\nabla\bar{u}^m\rangle\right]+K_m\delta^{\frac{1}{2}}.
\end{align*}
Thanks to previous relations and noting that $$ b(u^{n},\bar{u}^m,u^{n})-b(u^{n},\bar{u}^{m},\bar{u}^{m})=b(u^{n}-\bar{u}^{m},\bar{u}^{m},u^{n}-\bar{u}^{m})$$ we can continue the estimate of $\mathbb{E}\left[\lVert u^{n}-\bar{u}^m\rVert^2\right]$:
\begin{align*}
     \mathbb{E}\left[\lVert u^{n}-\bar{u}^m\rVert^2\right]\leq & K_m\delta^{\frac{1}{2}}+K\mathbb{E}\left[\lVert u_0^{n}-u_0\rVert^2\right]+\lVert\bar{u}_0^m-u_0\rVert^2+K\lVert\bar{u}_0^m-u_0\rVert+K\nu_n\\ &  +K\mathbb{E}\left[\lVert u_0^n-u_0\rVert^2\right]^{1/2}+2\lVert\bar{u}_0^m-u_0\rVert\mathbb{E}\left[\lVert u_0^n-u_0\rVert^2\right]^{1/2}\\ &   -2\mathbb{E}\left[\int_0^t b(u^n-\bar{u}^m,\bar{u}^m,u^n-\bar{u}^m)\ ds\right] + 2\mathbb{E}\left[\int_0^t b(u^n,v_m,u^n)\ ds\right] \\ & +  2\nu_n\mathbb{E}\left[\int_0^t \langle(-A)^{\frac{1}{2}}u^{n}(s), (-A)^{\frac{1}{2}}(\bar{u}^m-v_m)(s)\rangle\ ds\right]\\ & \leq
     K_m\delta^{\frac{1}{2}}+K\mathbb{E}[\lVert u_0^{n}-u_0\rVert^2]+\lVert\bar{u}_0^m-u_0\rVert^2+K\lVert\bar{u}_0^m-u_0\rVert+K\nu_n\\ &  +K\mathbb{E}\left[\lVert u_0^n-u_0\rVert^2\right]^{1/2}+2\lVert\bar{u}_0^m-u_0\rVert\mathbb{E}\left[\lVert u_0^n-u_0\rVert^2\right]^{1/2}\\ & + 2\mathbb{E}\left[\int_0^t b(u^n,v_m,u^n)\ ds \right]\\ & +  2\nu_n\mathbb{E}\left[\int_0^t \langle(-A)^{\frac{1}{2}}u^{n}(s), (-A)^{\frac{1}{2}}(\bar{u}^m-v_m)(s)\rangle\ ds\right]\\ & +2\lVert \nabla\bar{u}^m\rVert_{L^{\infty}([0,T]\times D)}\mathbb{E}\left[\int_0^t\lVert u^n-\bar{u}^m\rVert^2\ ds \right].
\end{align*}
Arguing as in the proof of Proposition \ref{weak result strong assump} we have
$$\mathbb{E}\left[\int_0^t b(u^n,v_m,u^n)\ ds\right]\stackrel{\lVert\rho^2(t)\nabla v_m(t)\rVert_{L^{\infty}}\leq K_m\delta}{\leq} K_m\delta \mathbb{E}\left[\int_0^T\lVert \nabla u^n\rVert_{L^2(\Gamma_{\delta})}^2\ ds\right]$$
\begin{align*}
& \nu_n\mathbb{E}\left[\int_0^t \langle(-A)^{\frac{1}{2}}u^{n}(s), (-A)^{\frac{1}{2}}(\bar{u}^m-v_m)(s)\rangle\ ds \right]\\ & \stackrel{\lVert\nabla v^m(t)\rVert_{L^2(D)}\leq K_m\delta^{-1/2}}{\leq} \lVert\nabla \bar{u}^m\rVert_{L^{\infty}(0,T;L^2(D))}\nu_n\mathbb{E}\left[\int_0^T\lVert\nabla u^n\rVert_{L^2(D)}\ ds\right]+\nu_nK_m\delta^{-1/2}\mathbb{E}\left[\int_0^T \lVert\nabla u^n\rVert_{L^2(\Gamma_{\delta})}\ ds\right].
\end{align*}
Thanks to this relations we can continue the estimate of $\mathbb{E}\left[\lVert u^{n}-\bar{u}^m\rVert^2\right]$:
\begin{align*}
      \mathbb{E}\left[\lVert u^{n}-\bar{u}^m\rVert^2\right]  & \leq
     K_m\delta^{\frac{1}{2}}+K\mathbb{E}[\lVert u_0^{n}-u_0\rVert^2]+\lVert\bar{u}_0^m-u_0\rVert^2+K\lVert\bar{u}_0^m-u_0\rVert+K\nu_n\\ &  +K\mathbb{E}\left[\lVert u_0^n-u_0\rVert^2\right]^{1/2}+2\lVert\bar{u}_0^m-u_0\rVert\mathbb{E}\left[\lVert u_0^n-u_0\rVert^2\right]^{1/2} + K_m\delta \mathbb{E}\left[\int_0^T\lVert\nabla u^n\rVert_{L^2(\Gamma_{\delta})}^2\ ds\right] \\ & + \lVert\nabla \bar{u}^m\rVert_{L^{\infty}(0,T;L^2(D))}\nu_n\mathbb{E}\left[\int_0^T\lVert\nabla u^n\rVert_{L^2(D)}\ ds\right]+\nu_nK_m\delta^{-1/2}\mathbb{E}\left[\int_0^T \lVert\nabla u^n\rVert_{L^2(\Gamma_{\delta})}\ ds\right]  \\ & +2\lVert\nabla\bar{u}^m\rVert_{L^{\infty}([0,T]\times D)}\mathbb{E}\left[\int_0^t\lVert u^n-\bar{u}^m\rVert^2\ ds \right].
\end{align*}
Taking $\delta=c\nu_n$, by Gronwall's inequality and Holder's inequality we have
\begin{align*}
      sup_{t\in[0,T]}\mathbb{E}\left[\lVert u^{n}-\bar{u}^m\rVert^2 \right] & \leq (
     K_m\nu_n^{1/2}+K\mathbb{E}[\lVert u_0^{n}-u_0\rVert^2]+\lVert\bar{u}_0^m-u_0\rVert^2+K\lVert\bar{u}_0^m-u_0\rVert+K\nu_n\\ &  +K\mathbb{E}\left[\lVert u_0^n-u_0\rVert^2\right]^{1/2}+ K_m\nu_n \mathbb{E}\left[\int_0^T\lVert\nabla u^n\rVert_{L^2(\Gamma_{\delta})}^2\ ds\right] \\ & + \lVert\nabla \bar{u}^m\rVert_{L^{\infty}(0,T;L^2(D))}\sqrt{\nu_n}\mathbb{E}\left[\int_0^T\nu_n\lVert\nabla u^n\rVert_{L^2(D)}^2\ ds\right]^{1/2}\\ &+K_m\mathbb{E}\left[\nu_n \int_0^T \lVert\nabla u^n\rVert^2_{L^2(\Gamma_{\delta})}\ ds\right]^{\frac{1}{2}} )  e^{2T\lVert\nabla \bar{u}^m\rVert_{L^{\infty}([0,T]\times D)}}.
\end{align*}

Taking the limsup with respect to $n$ of this expression for $m$ fixed we have
\begin{align}\label{estimate final thm}
    \limsup_{n\rightarrow+\infty} \sup_{t\in[0,T]}\mathbb{E}\left[\lVert u^{n}-\bar{u}^m\rVert^2\right]\leq K\lVert\bar{u}^m_0-u_0\rVert e^{2T\lVert\nabla \bar{u}^m\rVert_{L^{\infty}([0,T\times D])}}\stackrel{Remark \ \ref{Remark dis}}{\leq} \frac{K}{m}e^{-T\lVert\nabla \bar{u}^m\rVert_{L^{\infty}([0,T]\times D)}}.
\end{align}
Coming back to \begin{align*}
    \mathbb{E}\left[\lVert u^n(t)-u(t)\rVert^2\right]& \leq \frac{2}{m^2}+2 \mathbb{E}\left[\lVert u^n(t)-\bar{u}^m(t)\rVert^2\right].
\end{align*}
If we fix $\epsilon>0$ and $\bar{m}$ such that $2\frac{1+K\bar{m}}{\bar{m}^2}<\epsilon$, then taking the limsup with respect to $n$ of previous expression for $m=\bar{m}$ we have 
\begin{align*}
    \limsup_{n\rightarrow+\infty} \mathbb{E}\left[\lVert u^n(t)-u(t)\rVert^2\right]\leq \epsilon.
\end{align*}
We have the thesis from the arbitrariness of $\epsilon$.
\end{proof}\\

\begin{proof}[Proof of Theorem \ref{Theorem weak strong}]
Let $\{\bar{u}_0^m\}_{m\in\mathbb{N}}$ approximating $u_0$ in the sense of Theorem \ref{P.L. Lions} and $\{\bar{u}_m\}_{m\in\mathbb{N}}$ the corresponding solutions of the Euler equations, then for each $t,\ n,\ m$ we have\begin{align*}
    \lVert u^n(t)-u(t)\rVert^2& \leq 2 \lVert u^n(t)-\bar{u}^m(t)\rVert^2+2\lVert \bar{u}^m(t)-u(t)\rVert^2\\ &\stackrel{Thm\ \ref{P.L. Lions}}{\leq}\frac{2}{m^2}+2 \lVert u^n(t)-\bar{u}^m(t)\rVert^2.
\end{align*}
We adapt the computations of the proof of Corollary \ref{Strong result strong assum} and Lemma \ref{Lemma weak hp weak th}  to analyze the last term, hence some explanation will be omitted. For each $m$ and $\delta>0$ fixed, let us introduce the corrector of the boundary layer $v_m$, it satisfies previous estimates. We have at time $t$
\begin{align*}
    \lVert u^{n}-\bar{u}^m\rVert^2 & =\lVert u^{n}\rVert^2+\lVert\bar{u}^m\rVert^2-2\langle u^{n},\bar{u}^m\rangle\\
    & \stackrel{It\hat{o}}{\leq}\lVert u_0^{n}\rVert^2+t\nu_n\sum_{k=1}^N\lVert \sigma_k \rVert^2+2\sum_{k=1}^N\nu_n^{\frac{1}{2}}\int_0^t\langle u^{n}(s),\sigma_k\rangle dW^k_s\\ &+\lVert\bar{u}^m_0\rVert^2-2\langle u^{n},\bar{u}^m-v_m\rangle-2\langle u^n,v_m\rangle. 
\end{align*}
Let us rewrite $\langle u^{n},\bar{u}^m-v_m\rangle$ thanks to the weak formulation of $u^{n}$
\begin{align*}
    -2\langle u^{n},\bar{u}^m-v_m\rangle& =-2\langle u^{n}_0,(\bar{u}^m-v_m)(0)\rangle-2\int_0^t\langle u^{n}(s),\partial_s(\bar{u}^m-v_m)(s)\rangle\ ds\\ &+ 2\nu_n\int_0^t\langle(-A)^{\frac{1}{2}}u^{n}(s),(-A)^{\frac{1}{2}}(\bar{u}^m-v_m)(s)\rangle\ ds-2\int_0^t b(u^{n},\bar{u}^m-v_m,u^n)(s)\ ds\\
    & -2\sqrt{\nu_n}\sum_{k=1}^N\langle\sigma_k,(\bar{u}^m-v_m)\rangle W^k_t+2\sqrt{\nu_n}\sum_{k=1}^N\int_0^t\langle\sigma_k,(\bar{u}^m-v_m)(s)\rangle W^k_s\ ds.
\end{align*}
Moreover \begin{align*}
-2\langle u^{n}(s),\partial_s(\bar{u}^m-v_m)(s)\rangle=2\langle u^{n}(s),\partial_s v^m(s)\rangle+2\langle u^{n}(s),\nabla \bar{u}^m\cdot \bar{u}^m(s)\rangle. 
\end{align*}
Thanks to previous relations and noting that $$b(u^{n},\bar{u}^m,u^{n})-b(u^{n},\bar{u}^m,\bar{u}^m)=b(u^{n}-\bar{u}^m,\bar{u}^m,u^{n}-\bar{u}^m)$$ we have
\begin{align*}
    \lVert u^{n}-\bar{u}^m\rVert^2& =(\lVert u_0^{n}\rVert^2+\lVert\bar{u}_0^m\rVert^2-2\langle u^{n}_0,(\bar{u}^m-v_m)(0)\rangle)+(t\nu_n\sum_{k=1}^N\lVert \sigma_k \rVert^2+2\sqrt{\nu_n}\sum_{k=1}^N\int_0^t\langle u^{n}(s),\sigma_k\rangle dW^k_s\\ &-2\sqrt{\nu_n}\sum_{k=1}^N\langle \sigma_k,(\bar{u}^m-v_m)(t)\rangle W^k_t+2\sqrt{\nu_n}\sum_{k=1}^N\int_0^t \langle \sigma_k,(\bar{u}^m-v_m)(s)\rangle W^k_s \ ds)\\ &+(2\int_0^t b(u^{n},v_m,u^{n})(s) \ ds-2\int_0^t b(u^{n}-\bar{u}^m,\bar{u}^m,u^{n}-\bar{u}^m)(s)\ ds)+\\ & (-2\langle u^{n},v_m\rangle+2\nu_n \int_0^t\langle (-A)^{\frac{1}{2}}u^{n}(s),(-A)^{\frac{1}{2}}(\bar{u}^m-v_m)(s)\rangle\ ds+2\int_0^t \langle u^{n},\partial_s v_m\rangle\ ds)\\ &= I_1(t)+I_2(t)+I_3(t)+I_4(t).
\end{align*}
Thus $$\mathbb{E}\left[sup_{t\in [0,T]}\lVert u^{n}-\bar{u}^m\rVert^2\right]\leq \mathbb{E}\left[sup_{t\in [0,T]}I_1\right]+\mathbb{E}\left[sup_{t\in [0,T]}I_2\right]+\mathbb{E}\left[sup_{t\in [0,T]}I_3\right]+\mathbb{E}\left[sup_{t\in [0,T]}I_4\right]$$
\begin{align*}
    \mathbb{E}\left[\sup_{t\in [0,T]} I_1\right]&= \mathbb{E}\left[\lVert u_0^{n}\rVert^2+\lVert\bar{u}^m_0\rVert^2-2\langle u^{n}_0,(\bar{u}^m-v_m)(0)\rangle\right]\\ & = -\mathbb{E}\left[\lVert u_0^n\rVert^2\right]+\lVert\bar{u}_0^m\rVert^2-2\mathbb{E}\left[\langle u_0^n,\bar{u}_0^m-u^n_0\rangle\right] +2\mathbb{E}\left[\langle u_0^n,v_m(0)\rangle\right]    \\  & \leq -\mathbb{E}\left[\lVert u_0-u_0^n\rVert^2\right]-\lVert u_0\rVert^2-2\mathbb{E}\left[\langle u_0^n-u_0,u_0\rangle\right]+\lVert u_0\rVert^2+\lVert u_0-\bar{u}_0^m\rVert^2\\ & +2\langle u_0,\bar{u}^m_0-u_0\rangle+2\mathbb{E}\left[\lVert u_0^n\rVert\lVert\bar{u}_0^m-u^n_0\rVert\right] +2\mathbb{E}\left[\lVert u_0^n\rVert\right]\lVert v_m(0)\rVert \\ \\  & \stackrel{\lVert v_m(t)\rVert_{L^2(D)}\leq K_m\delta^{\frac{1}{2}}}{\leq}\mathbb{E}\left[\lVert u_0^n-u_0\rVert^2\right]+K\mathbb{E}\left[\lVert u_0^n-u_0\rVert^2\right]^{1/2}+ \lVert\bar{u}^m_0-u_0\rVert^2\\ & +K\lVert\bar{u}^m_0-u_0\rVert +K_m\delta^{\frac{1}{2}}\mathbb{E}\left[\lVert u^n_0\rVert^2\right]^{1/2} 
   .
\end{align*}
The analysis of the others is similar to what we have done before:
\begin{align*}
    \mathbb{E}\left[sup_{t\in[0,T]} I_3\right] &\leq 2 \mathbb{E}\left[sup_{t\in[0,T]}\int_0^t b(u^{n},v^m,u^{n})(s)\ ds\right]+2 \mathbb{E}\left[sup_{t\in[0,T]}\int_0^t b(u^{n}-\bar{u}^m,\bar{u}^m,u^{n}-\bar{u}^m)(s)\ ds\right]\\
    &\leq 2 \mathbb{E}\left[\int_0^T|b(u^{n},v^m,u^{n})(s)|\ ds\right]+2 \mathbb{E}\left[\int_0^T|b(u^{n}-\bar{u}^m,\bar{u}^m,u^{n}-\bar{u}^m)(s)|\ ds\right]\\
    & \leq2\mathbb{E}\left[\int_0^T \lVert\rho^2\nabla v_m(s)\rVert_{L^{\infty}(D)}\lVert u^{n}\rho^{-1}(s)\rVert_{L^2(D)}^2\right]+2\mathbb{E}\left[\int_0^T\lVert u^{n}-\bar{u}^m\rVert^2(s)\lVert \nabla\bar{u}^m(s)\rVert_{L^{\infty}(D)}\ ds\right]\\ &\leq 2K_m\delta \mathbb{E}\left[\int_0^T\lVert\nabla u^{n}(s)\rVert_{L^2(\Gamma_{\delta})}^2\ ds\right]+2\lVert\nabla \bar{u}^m\rVert_{L^{\infty}([0,T]\times D)}\mathbb{E}\left [\int_0^T\lVert u^{n}-\bar{u}^m(s)\rVert^2\ ds \right]
\end{align*}

Let us analyze all the elements of $I_2$ exploiting previous energy equalities and properties of Brownian motion:
\begin{align*}
    \mathbb{E}\left[\sup_{t\in[0,T]} T\nu_n \sum_{k=1}^N \lVert\sigma_k\rVert^2\right]<K\nu_n
\end{align*}
\begin{align*}
    \mathbb{E}\left[sup_{t\in[0,T]} 2\sum_{k=1}^N \sqrt{\nu_n}\int_0^t \langle u^{n},\sigma_k\rangle dW^k_s\right]\stackrel{Doob's\  ineq}{\leq}K\sqrt{\nu_n}
\end{align*}
\begin{align*}
    \mathbb{E}\left[sup_{t\in[0,T]} 2\sqrt{\nu_n}\sum_{k=1}^N\langle \sigma_k,\bar{u}^m-v_m(t)\rangle W^k_t\right] & \leq K\sqrt{\nu_n}\lVert\bar{u}^m-v_m\rVert_{L^{\infty}(0,T;L^2(D))}\mathbb{E}\left[sup_{t\in[0,T]}|W^k_t|\right]\\ &\leq K\sqrt{\nu_n}\lVert\bar{u}^m-v_m\rVert_{L^{\infty}(0,T;L^2(D))}
\end{align*}
\begin{align*}
    \mathbb{E}\left[\sup_{t\in[0,T]} 2\sqrt{\nu_n}\sum_{k=1}^N \int_0^t \langle \sigma_k,u-v(s)\rangle W^k_s\ ds\right]\leq K\sqrt{\nu_n}\lVert\bar{u}^m-v_m\rVert_{L^{\infty}(0,T;L^2(D))}.
\end{align*}
It remains only to analyze $I_4$. Some of the estimates below use tricks already presented, hence some details have been omitted.
\begin{align*}
    2\nu_n \mathbb{E}\left[\sup_{t\in[0,T]}\int_0^t \langle (-A)^{\frac{1}{2}}u^{n},(-A)^{\frac{1}{2}}(\bar{u}^m-v_m)\rangle\ ds\right]& \leq 2\nu_n \mathbb{E}\left[\int_0^T \lVert\nabla \bar{u}^m \rVert_{L^{\infty}(0,T;L^2(D))}\lVert\nabla u^{n}(s)\rVert\ ds \right]\\ & +2\nu_n\mathbb{E}\left[\int_0^T K_m\delta^{-\frac{1}{2}}\lVert\nabla u^{n}(s)\rVert_{L^2(\Gamma_{\delta})}\ ds\right]
\end{align*}
\begin{align*}
    \mathbb{E}\left[\sup_{t\in[0,T]} \langle u^{n},v_m\rangle\right]\leq \mathbb{E}\left[\sup_{t\in[0,T]} \lVert u^{n}\rVert\sup_{t\in[0,T]}\lVert v_m\rVert \right]\leq K_m \delta^{\frac{1}{2}}\mathbb{E}\left[\sup_{t\in[0,T]} \lVert u^{n}\rVert^2\right]^{\frac{1}{2}}\leq K_m\delta^{\frac{1}{2}}
\end{align*}
\begin{align*}
    \mathbb{E}\left[sup_{t\in[0,T]} \int_0^t \langle u^{n},\partial_s v_m\rangle\ ds\right]\leq \lVert\partial_s v_m\rVert_{L^{\infty}(0,T;L^2(D))}\mathbb{E}\left[\int_0^T\lVert u^{n}(s)\rVert\ ds \right]\leq K_m \delta^{\frac{1}{2}}.
\end{align*}
In conclusion, if we take $\delta=c\nu$, then by Holder's inequality \begin{align*}
    \mathbb{E}[\sup_{t\in [0,T]}\lVert u^{n}-\bar{u}^m\rVert^2] & \leq \mathbb{E}\left[\lVert u_0^n-u_0\rVert^2\right]+K\mathbb{E}\left[\lVert u_0^n-u_0\rVert^2\right]^{1/2}+\lVert\bar{u}_0^m-u_0\rVert^2+K\lVert\bar{u}_0^m-u_0\rVert\\ & +K_m\sqrt{\nu_n}+K\nu_n+K\sqrt{\nu_n}+K\sqrt{\nu_n}\lVert\bar{u}_m-v_m\rVert_{L^{\infty}(0,T;L^2(D))} \\ & +2K_m\nu_n\mathbb{E}\left[\int_0^T\lVert\nabla u^n(s)\rVert_{L^2(\Gamma_{c\nu_n})}^2\ ds\right]+2\lVert\nabla\bar{u}^m\rVert_{L^{\infty}([0,T]\times D)}\mathbb{E}\left[\int_0^T\lVert u^n-\bar{u}^m(s)\rVert^2 \ ds\right]\\ & +\lVert\nabla\bar{u}^m\rVert_{L^{\infty}(0,T;L^2(D))}\sqrt{\nu_n}\mathbb{E}\left[\nu_n \int_0^T \lVert\nabla u^n(s)\rVert_{L^2(D)}^2\ ds\right]^{1/2}\\ &+2K_m\mathbb{E}\left[ \int_0^T \nu_n \lVert\nabla u^n\rVert_{L^2(\Gamma_{\nu_n})}^2\ ds\right]^{1/2}+K_m\nu_n.
\end{align*} 
Taking the limsup with respect to $n$ of this expression for $m$ fixed we have \begin{align*}
    \limsup_{n\rightarrow+\infty}\mathbb{E}\left[\sup_{t\in [0,T]}\lVert u^{n}-\bar{u}^m\rVert^2\right] & \leq \lVert\bar{u}_0^m-u_0\rVert^2+K\lVert\bar{u}_0^m-u_0\rVert\\ & +2T\lVert\nabla\bar{u}^m\rVert_{L^{\infty}([0,T]\times D)}\limsup_{n\rightarrow +\infty}\left(\sup_{t\in[0,T]}\mathbb{E}\left[\lVert u^n-\bar{u}_m\rVert^2\right]\right)\\ &
    \stackrel{eq.\ (\ref{estimate final thm})}{\leq} \lVert\bar{u}_0^m-u_0\rVert^2+K\lVert\bar{u}_0^m-u_0\rVert\\ & +2T\lVert \nabla\bar{u}^m\rVert_{L^{\infty}([0,T]\times D)}\frac{K}{m}e^{-T\lVert\nabla \bar{u}^m\rVert_{L^{\infty}([0,T]\times D)}}\\&  \stackrel{Remark\ \ref{Remark dis}}{\leq} \lVert\bar{u}_0^m-u_0\rVert^2+K\lVert\bar{u}_0^m-u_0\rVert+\frac{K}{m}.
\end{align*}
Coming back to \begin{align*}
    \lVert u^n(t)-u(t)\rVert^2\leq  \frac{2}{m^2}+2 \lVert u^n(t)-\bar{u}^m(t)\rVert^2.
\end{align*}
If we fix $\epsilon>0$ and $\bar{m}$ such that $$2\lVert\bar{u}_0^{\bar{m}}-u_0\rVert^2+2K\lVert\bar{u}_0^{\bar{m}}-u_0\rVert+2\frac{K\bar{m}+1}{\bar{m}^2}<\epsilon,$$
then taking the expected value of the supremum in time of the previous expression for $m=\bar{m}$ we have 
\begin{align*}
     \mathbb{E}\left[\sup_{t\in[0,T]}\lVert u^n(t)-u(t)\rVert^2\right]\leq  \frac{2}{\bar{m}^2}+2\mathbb{E}\left[\sup_{t\in[0,T]} \lVert u^n(t)-\bar{u}^{\bar{m}}(t)\rVert^2\right].
\end{align*}
Taking the limsup with respect to $n$ of the last inequality we have
$$\limsup_{n\rightarrow+\infty}\mathbb{E}\left[\sup_{t\in[0,T]}\lVert u^n(t)-u(t)\rVert^2\right]<\epsilon. $$  We have the thesis from the arbitrariness of $\epsilon$.
\end{proof}
\section{A Deterministic Remark} \label{Sec Rem}
As anticipated in Remark \ref{remark novelty}, in this section we prove an inviscid limit result in the deterministic framework, analogous to Theorem \ref{Theorem weak strong} for a particular class of external forces. This result extends the setting considered by Kato in \cite{kato1984remarks} and it is the object of Theorem \ref{deterministc thm}.
\begin{lemma}\label{P.L. Lions style} 
If $u$ is a weak solution of the Euler equations with initial condition $u_0\in H$ and external force $f\in L^2(0,T;H)$ and $\Bar{u}$ is the unique weak solution of the Euler equations with initial condition $\Bar{u}_0\in H\cap C^{1,\epsilon}(\bar{D})$ and external force $\Bar{f}\in L^2(0,T;H)\cap C^{1,\epsilon}([0,T]\times\bar{D}) $, then \begin{align*}
    \lVert (u-\bar{u})(t)\rVert^2 \leq& e^{2t\lVert\nabla \bar{ u}\rVert_{L^{\infty}([0,T]\times D)}}(\lVert u_0- \bar{u}_0\rVert^2\\ &+2\sqrt{T}\lVert f-\bar{f}\rVert_{L^2(0 ,T;H)}  \left(\sqrt{2\lVert u_0\rVert^2+4T\lVert f\rVert_{L^2(0,T;H)}^2}+\sqrt{2\lVert \bar{u}_0\rVert^2+4T\lVert \bar{f}\rVert_{L^2(0,T;H)}^2}\right)).\end{align*}
For each $K\geq 1$, calling
    \begin{align*}
     O_n^K = &\{u_0\in H,\ f\in L^2(0,T;H): \ \exists \bar{u_0}\in H\cap C^{1,\epsilon}(\bar{D}),\ \Bar{f}\in L^2(0,T;H)\cap C^{1,\epsilon}([0,T]\times\bar{D}),\\ &  \lVert u_0-\bar{u}_0\rVert <\frac{1}{n}e^{-KT\lVert\nabla \bar{ u}\rVert_{L^{\infty}([0,T]\times D)}},\ \lVert f-\bar{f}\rVert <\frac{1}{n^2}e^{-2KT\lVert\nabla \bar{ u}\rVert_{L^{\infty}([0,T]\times D)}}\}
    \end{align*}
where $\bar{u}$ is the solution of the Euler equations with initial condition $\Bar{u}_0$ and external force $\bar{f}$, then for each $(u_0,f)\in \bigcap_{n\geq 1}O_n^K=:\Tilde{O^K} $ there exists a unique $u\in C([0,T],H)$ weak solution of the Euler equations with initial condition $u_0$ and external force ${f}$. Moreover the energy equality $$\lVert u(t)\rVert^2=\lVert u_0\rVert^2+2\int_0^t\langle f, u\rangle\ ds $$ holds.
\end{lemma}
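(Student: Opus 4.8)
The plan is to first establish the stability estimate by a weak--strong energy argument and then to deduce existence and uniqueness on $\tilde{O}^K$ from it, mirroring the structure of the proof of Theorem \ref{P.L. Lions} while keeping track of the force terms.

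For the stability estimate I would expand $\|(u-\bar u)(t)\|^2=\|u(t)\|^2+\|\bar u(t)\|^2-2\langle u(t),\bar u(t)\rangle$ and treat the three pieces separately. For $\|u\|^2$ I use the energy inequality from Definition \ref{Well Posed Euler }, for $\|\bar u\|^2$ the corresponding energy equality (legitimate since $\bar u$ is the classical solution of Theorem \ref{Kato classiche}), and for the cross term I test the weak formulation of $u$ against $\bar u$ and substitute the classical Euler identity $\partial_s\bar u=-\nabla\bar u\cdot\bar u-\nabla\bar p+\bar f$, using $\langle u,\nabla\bar p\rangle=0$. Exactly as in Theorem \ref{P.L. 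Lions}, the advective contributions combine into $-2\int_0^t b(u-\bar u,\bar u,u)\,ds$; writing $w=u-\bar u$ and noting that $\operatorname{div}w=0$, $w\cdot n|_{\partial D}=0$ give $b(w,\bar u,\bar u)=0$, this reduces to $-2\int_0^t b(w,\bar u,w)\,ds$, which is bounded by $2\|\nabla\bar u\|_{L^\infty([0,T]\times D)}\int_0^t\|w\|^2\,ds$. The four force contributions collapse neatly into $2\int_0^t\langle f-\bar f,w\rangle\,ds$.

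The genuinely new ingredient relative to Theorem \ref{P.L. Lions} is the control of this force term. First I record the a priori bound $\sup_t\|u(t)\|\le\sqrt{2\|u_0\|^2+4T\|f\|_{L^2(0,T;H)}^2}$: from the energy inequality, writing $M=\sup_t\|u(t)\|$ one gets $M^2\le\|u_0\|^2+2\sqrt T\|f\|_{L^2}M$, and solving this quadratic inequality yields the bound (same for $\bar u$ with its own data). Then $2\int_0^t\langle f-\bar f,w\rangle\,ds\le 2\sup_s\|w(s)\|\,\sqrt T\|f-\bar f\|_{L^2}\le 2\sqrt T\|f-\bar f\|_{L^2}\bigl(\sqrt{2\|u_0\|^2+4T\|f\|^2}+\sqrt{2\|\bar u_0\|^2+4T\|\bar f\|^2}\bigr)$, a constant independent of $t$. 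Feeding $\|w(t)\|^2\le(\text{constant})+2\|\nabla\bar u\|_{L^\infty}\int_0^t\|w\|^2\,ds$ into Gronwall's inequality produces precisely the stated estimate.

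For the second part I would fix $(u_0,f)\in\tilde{O}^K$ and, for each $n$, choose a smooth pair $(\bar u_0^n,\bar f^n)$ realizing membership in $O_n^K$, with classical solution $\bar u^n$. Uniqueness is then immediate: applying the stability estimate to an arbitrary weak solution $u$ and to $\bar u^n$, the factor $e^{2T\|\nabla\bar u^n\|_{L^\infty}}$ is absorbed by the thresholds $\tfrac1n e^{-KT\|\nabla\bar u^n\|_{L^\infty}}$ and $\tfrac1{n^2}e^{-2KT\|\nabla\bar u^n\|_{L^\infty}}$ (the quadratic vs.\ linear appearance of the data and force gaps is exactly why the force threshold carries $n^{-2}$ and the exponent $2K$), so that $\|u-\bar u^n\|\to 0$ uniformly, forcing any two weak solutions to coincide. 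For existence I would show $\{\bar u^n\}$ is Cauchy in $C([0,T];H)$ by applying the stability estimate to the pair $\bar u^n,\bar u^m$ with the reference \emph{always} chosen to be the member with the smaller $\|\nabla\bar u^\bullet\|_{L^\infty([0,T]\times D)}$; the triangle inequality on the data and forces then leaves only exponents of the form $(2-2K)T\|\nabla\bar u^\bullet\|_{L^\infty}\le 0$ (here $K\ge 1$ is exactly what is needed), whence $\|\bar u^n-\bar u^m\|^2\lesssim n^{-2}+m^{-2}$. The strong limit $u$ is the desired weak solution, its weak formulation and the energy equality being obtained by passing to the limit in those of $\bar u^n$, the $L^2$-strong convergence making the nonlinear pairing $b(\bar u^n,\phi,\bar u^n)$ and the energy identity converge. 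I expect this Cauchy step to be the main obstacle: the two exponential factors do not cancel unless one references the smoother of the two approximations, and it is precisely this choice together with $K\ge 1$ that closes the argument.
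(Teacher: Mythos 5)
Your proposal is correct and follows essentially the same route as the paper's proof: the a priori bound $\sup_t\lVert \bar u\rVert^2\le 2\lVert\bar u_0\rVert^2+4T\lVert\bar f\rVert_{L^2(0,T;H)}^2$ from the energy (in)equality, the weak--strong estimate obtained by testing the weak formulation of $u$ against $\bar u$ so that the force terms collapse to $2\int_0^t\langle f-\bar f,u-\bar u\rangle\,ds$ followed by Gronwall, uniqueness by comparing two weak solutions to the approximating sequence, existence via a Cauchy argument on $\{\bar u^n\}$, and the energy equality by passing to the limit. The one point where you are more careful than the paper is the Cauchy step: the paper simply takes $n\ge m$ and uses $\bar u^m$ as the smooth reference, which leaves an uncontrolled factor of the form $e^{2T\lVert\nabla\bar u^m\rVert_{L^\infty}-2KT\lVert\nabla\bar u^n\rVert_{L^\infty}}$ on the $n$-indexed terms, whereas your choice of referencing whichever of $\bar u^n,\bar u^m$ has the smaller gradient norm makes all exponents nonpositive (using $K\ge1$) and closes the estimate cleanly.
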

\begin{proof}
\begin{itemize}
    \item [Estimate:] For what concern the solution with smooth initial condition and external force, thanks to Theorem \ref{Kato classiche}, $\bar{u}$ is a classical solution and the energy equality holds, namely \begin{align*}
    \lVert \bar{u}(t)\rVert^2& = \lVert \bar{u}(0)\rVert^2+2\int_0^t\langle \bar{f},\bar{u}\rangle\ ds\leq \lVert \bar{u}(0)\rVert^2+2\int_0^t\lVert \bar{f}\rVert \lVert \bar{u} \rVert\ ds\\ & \stackrel{Young\ ineq. }{\leq}  \lVert \bar{u}_0\rVert^2+2T\int_0^T\lVert \bar{f}\rVert^2\ ds+\int_0^T\frac{\lVert\bar{u}\rVert^2}{2T}\ ds\\ & \leq \lVert \bar{u}_0\rVert^2+2T\lVert \bar{f}\rVert_{L^2(0,T;H)}^2+\frac{\lVert\bar{u}\rVert_{L^{\infty}(0,T;H)}^2}{2}.
\end{align*} 
Thus \begin{align}\label{A priori estim}
    \sup_{t\in [0,T]}\lVert \bar{u}\rVert^2\leq 2\lVert \bar{u}_0\rVert^2+4T\lVert \bar{f}\rVert_{L^2(0,T;H)}^2.
\end{align}
The same estimate holds for any weak solution of the Euler equations (if it exists) with initial condition $u_0$ and external force $f$ thanks to energy inequality and the same computations. Let us consider the weak formulation satisfied by $u$ using as test function $\bar{u}$. Then at time $t$
\begin{align*}
    \langle u, \bar{u}\rangle & =\langle u_0,\bar{u}_0\rangle+\int_0^t\langle u,\partial_s \bar{u}\rangle \ ds+\int_0^t b(u,\bar{u},u)\ ds+\int_0^t \langle f,\bar{u}\rangle \ ds\\ &= \langle u_0,\bar{u}_0\rangle+\int_0^t b(u-\bar{u},\bar{u},u-\bar{u})\ ds+\int_0^t \langle f,\bar{u}\rangle \ ds+\int_0^t \langle \bar{f},u\rangle \ ds\\ & \leq \langle u_0,\bar{u}_0\rangle+\lVert \nabla\bar{u}\rVert_{L^{\infty}(0,T;L^{\infty}(D))}\int_0^t \lVert u-\bar{u}\rVert^2 \ ds+\int_0^t \langle f,\bar{u}\rangle \ ds+\int_0^t \langle \bar{f},u\rangle \ ds.
\end{align*}
Thus \begin{align*}
    \lVert u-\bar{u} \rVert^2 & = \lVert u\rVert^2+\lVert \bar{u}\rVert^2-2\langle u,\bar{u}\rangle\\  & \stackrel{energy\  ineq.}{\leq} \lVert\bar{u}_0\rVert^2+2\int_0^t\langle \bar{f},\bar{u}\rangle\ ds+\lVert{u}_0\rVert^2+2\int_0^t\langle {f},{u}\rangle\ ds-2\langle u,\bar{u}\rangle\\  &
    \stackrel{weak\ form.}{\leq} \lVert u_0-\bar{u}_0\rVert^2+2\lVert \nabla\bar{u}\rVert_{L^{\infty}(0,T;L^{\infty}(D))}\int_0^t \lVert u-\bar{u}\rVert^2 \ ds+2\int_0^t\lVert u-\bar{u}\rVert\lVert f-\bar{f}\rVert\ ds\\ & \leq   \lVert u_0-\bar{u}_0\rVert^2+2\lVert \nabla\bar{u}\rVert_{L^{\infty}(0,T;L^{\infty}(D))}\int_0^t \lVert u-\bar{u}\rVert^2 \ ds\\ &+2\sqrt{T}\lVert f-\bar{f}\rVert_{L^2(0,T;H)}(\lVert u\rVert_{L^{\infty}(0,T;H)}+\lVert \bar{u}\rVert_{L^{\infty}(0,T;H)}) \\ & \stackrel{(\ref{A priori estim})}{\leq} \lVert u_0-\bar{u}_0\rVert^2+2\lVert \nabla\bar{u}\rVert_{L^{\infty}(0,T;L^{\infty}(D))}\int_0^t \lVert u-\bar{u}\rVert^2 \ ds\\ &+2\sqrt{T}\lVert f-\bar{f}\rVert_{L^2(0,T;H)}\left(\sqrt{2\lVert \bar{u}_0\rVert^2+4T\lVert \bar{f}\rVert_{L^2(0,T;H)}^2}+\sqrt{2\lVert {u}_0\rVert^2+4T\lVert {f}\rVert_{L^2(0,T;H)}^2}\right).
\end{align*}
Thus \begin{align*}
    \lVert (u-\bar{u})(t)\rVert^2 & \leq e^{2t\lVert\nabla \bar{ u}\rVert_{L^{\infty}([0,T]\times D)}}(\lVert u_0- \bar{u}_0\rVert^2\\ &+2\sqrt{T}\lVert f-\bar{f}\rVert_{L^2(0 ,T;H)}  \left(\sqrt{2\lVert u_0\rVert^2+4T\lVert f\rVert_{L^2(0,T;H)}^2}+\sqrt{2\lVert \bar{u}_0\rVert^2+4T\lVert \bar{f}\rVert_{L^2(0,T;H)}^2}\right)).\end{align*}
\item[Existence:]
Let $(u_0,\ f)\in \Tilde{O^K}$ and $\{(\bar{u}_0^n,\bar{f}^n)\}_{n\in\mathbb{N}}$ a sequence which approximates $(u,f)$ in the sense of the theorem, namely $\bar{u}^n_0\in H\cap C^{1,\epsilon}(\bar{D}),\ \bar{f}^n\in L^2(0,T;H)\cap C^{1,\epsilon}([0,T]\times\bar{D}) $ and $$\lVert u_0-\bar{u}^n_0\rVert <\frac{1}{n}e^{-KT\lVert\nabla \bar{ u}^n\rVert_{L^{\infty}([0,T]\times D)}}$$ 
$$ \lVert f-\bar{f}^n\rVert_{L^2(0,T;H)} <\frac{1}{n^2}e^{-2KT\lVert\nabla \bar{ u}^n\rVert_{L^{\infty}([0,T]\times D)}},$$
where $\bar{u}^n$ is the solution of the Euler equations with initial condition $\Bar{u}_0^n$ and external force $\bar{f}^n$. We will prove that $\{\bar{u}^n\}$ is a Cauchy sequence in $C(0,T;H)$ and the solution of the Euler equations with initial condition $u_0$ and external force $f$ is unique. \\ Preliminarily, note that if $\lVert a-b \rVert ^2\leq \alpha,$ then $\lVert a\rVert^2\leq 4\lVert b\rVert^2+\frac{4}{3}\alpha$.\\ 
For what concern uniqueness, if $u^1$ and $u^2$ are two solutions of the Euler equations with initial condition $u_0$ and external force $f$, then at time $t$\begin{align*}
    \lVert u^1-u^2\rVert^2 & \leq  2\lVert u^1-\bar{u}^n\rVert^2+2\lVert u^2-\bar{u}^n\rVert^2\\& \leq  4  e^{2t\lVert\nabla \bar{ u}^n\rVert_{L^{\infty}([0,T]\times D)}}(\lVert u_0- \bar{u}_0^n\rVert^2\\ &+2\sqrt{T}\lVert f-\bar{f}^n\rVert_{L^2(0,T;H)}\left(\sqrt{2\lVert u_0\rVert^2+4T\lVert f\rVert_{L^2(0,T;H)}^2}+\sqrt{2\lVert \bar{u}_0^n\rVert^2+4T\lVert \bar{f}^n\rVert_{L^2(0,T;H)}^2}\right))\\ & \leq \frac{4}{n^2} \left(1+2\sqrt{T}\left(\sqrt{2\lVert u_0\rVert^2+4T\lVert f\rVert_{L^2(0,T;H)}^2}+\sqrt{8\lVert u_0\rVert^2+16T\lVert f\rVert_{L^2(0,T;H)}^2+\frac{8+16T}{3}}\right)\right).
\end{align*}

From the last inequality the uniqueness of the solution is evident.
Lastly let us consider $\lVert \bar{u}^n-\bar{u}^m\rVert^2$ for $n\geq m$. We have
\begin{align*}
\lVert \bar{u}^n-\bar{u}^m\rVert^2 & \leq    e^{2T\lVert\nabla \bar{ u}^m\rVert_{L^{\infty}([0,T]\times D)}}(\lVert \bar{u}^n_0- \bar{u}^m_0\rVert^2\\ &+2\sqrt{T}\lVert \bar{f}^n-\bar{f}^m\rVert_{L^2(0,T;H)}\left(\sqrt{2\lVert \bar{u}^m_0\rVert^2+4T\lVert \bar{f}^m\rVert_{L^2(0,T;H)}^2}+\sqrt{2\lVert \bar{u}^n_0\rVert^2+4T\lVert \bar{f}^n\rVert_{L^2(0,T;H)}^2}\right))\\  & \leq  e^{2T\lVert\nabla \bar{ u}^m\rVert_{L^{\infty}([0,T]\times D)}}(2\lVert \bar{u}^n_0-u_0\rVert^2 +2\lVert \bar{u}^m_0-u_0\rVert^2\\ &+2\sqrt{T}\lVert \bar{f}^n-f\rVert_{L^2(0,T;H)}\left(\sqrt{2\lVert \bar{u}^m_0\rVert^2+4T\lVert \bar{f}^m\rVert_{L^2(0,T;H)}^2}+\sqrt{2\lVert \bar{u}^n_0\rVert^2+4T\lVert \bar{f}^n\rVert_{L^2(0,T;H)}^2}\right)\\ & +2\sqrt{T}\lVert f-\bar{f}^m\rVert_{L^2(0,T;H)}\left(\sqrt{2\lVert \bar{u}^m_0\rVert^2+4T\lVert \bar{f}^m\rVert_{L^2(0,T;H)}^2}+\sqrt{2\lVert \bar{u}^n_0\rVert^2+4T\lVert \bar{f}^n\rVert_{L^2(0,T;H)}^2}\right))\\ & \leq  {C(T,\lVert u\rVert, \lVert f\rVert_{L^2(0,T;H)})}\left(\frac{1}{n^2}+\frac{1}{m^2}\right).
\end{align*}
The last inequality implies existence.
\item[Energy:] Let $(u_0,\ f)\in \Tilde{O^K}$ and $\{(\bar{u}_0^n,\bar{f}^n)\}_{n\in\mathbb{N}}$ a sequence which approximates $(u,f)$ in the sense of the theorem like in the previous step. Then for each $n\in \mathbb{N}$ \begin{align*}
    \lVert \bar{u}^n(t)\rVert^2=\lVert \bar{u}^n_0\rVert^2+\int_0^t\langle \bar{u}^n(s),\bar{f}^n(s)\rangle\ ds.
\end{align*}
Exploiting the fact that $\bar{u}^n\stackrel{C([0,T];H)}{\longrightarrow} u,\ \bar{f}^n\stackrel{L^2(0,T;H)}{\longrightarrow} f$ we get easily the thesis.

\end{itemize}
\end{proof}\\
Calling $\Tilde{O}:=\Tilde{O^1}$, for each $(u_0,f)\in \Tilde{O}$ we will say that $\{(\bar{u}^m_0,\bar{f}^m)\}_{m\in \mathbb{N}}$ approximates $(u_0,f)$ in the sense of Theorem \ref{P.L. Lions style} if $\bar{u}^m_0\in H\cap C^{1,\epsilon}(\bar{D}),\ \bar{f}^m\in L^2(0,T;H)\cap C^{1,\epsilon}([0,T]\times\bar{D}) $ and $$\lVert u_0-\bar{u}^m_0\rVert <\frac{1}{m}e^{-2T\lVert\nabla \bar{ u}^m\rVert_{L^{\infty}([0,T]\times D)}}$$ 
$$ \lVert f-\bar{f}^m\rVert_{L^2(0,T;H)} <\frac{1}{m^2}e^{-4T\lVert\nabla \bar{ u}^m\rVert_{L^{\infty}([0,T]\times D)}}$$
where $\bar{u}^m$ is the solution of the Euler equations with initial condition $\Bar{u}_0^m$ and external force $\bar{f}^m$.
\begin{remark}\label{Remark det}
If $(u_0,f)\in \Tilde{O}$ and $\{(\bar{u}^m_0,\bar{f}^m)\}_{m\in \mathbb{N}}$ approximates $(u_0,f)$ in the sense of Theorem \ref{P.L. Lions style}, then
\begin{align*}
    \lVert (u-\bar{u}^m)(t)\rVert^2  & \leq  e^{2t\lVert\nabla \bar{ u}^m\rVert_{L^{\infty}([0,T]\times D)}}(\lVert u_0- \bar{u}_0^m\rVert^2\\ &+2\sqrt{T}\lVert f-\bar{f}^m\rVert_{L^2(0 ,T;H)}\left(\sqrt{2\lVert u_0\rVert^2+4T\lVert f\rVert_{L^2(0,T;H)}^2}+\sqrt{2\lVert \bar{u}_0^m\rVert^2+4T\lVert \bar{f}^m\rVert_{L^2(0,T;H)}^2}\right))\\ & \leq  \frac{1}{m^2} \left(1+2\sqrt{T}\left(\sqrt{2\lVert u_0\rVert^2+4T\lVert f\rVert_{L^2(0,T;H)}^2}+\sqrt{8\lVert u_0\rVert^2+16T\lVert f\rVert_{L^2(0,T;H)}^2+\frac{8+16T}{3}}\right)\right)\\ & \leq  \frac{K\left(T,\lVert u_0\rVert,\lVert f\rVert_{L^2(0,T;H)}\right)}{m^2}.\end{align*}
\end{remark}
Thanks to Lemma \ref{P.L. Lions style} we are able to prove a Kato type inviscid limit result also in the deterministic framework. \\
\begin{theorem}\label{deterministc thm}
If $(u_0,f)\in \Tilde{O}$, $u_0^n\in H,\ f^n\in L^2(0,T;H)$, $$\lim_{n\rightarrow+\infty}\lVert u_0^n-u_0\rVert=0,\ \ \lim_{n\rightarrow +\infty}\lVert f^n-f\rVert_{L^2(0,T;H)}=0. $$  Let $u$ be the solution of the Euler equations with initial condition $u_0$ and external force $f$, $u^n$ be the solution of the deterministic Navier-Stokes equations with viscosity $\nu_n$, initial condition $u^n_0$ and external force $f^n$. If $$\lim_{n\rightarrow+\infty }\nu_n=0,\ \ \lim_{n\rightarrow +\infty}\nu_n\int_0^T \lVert\nabla u^{n}(t) \rVert^2_{L^2(\Gamma_{c\nu_n})}\ dt= 0,  $$ then 
$$\lim_{n \rightarrow +\infty}\sup_{t\in [0,T]}\lVert u^{n}-u\rVert^2= 0. $$
\end{theorem}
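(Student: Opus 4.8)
The plan is to mirror the proof of Theorem \ref{Theorem weak strong}, but to work directly with the deterministic energy balance: since there is no expectation and no stochastic integral, the supremum in time causes no additional trouble, so there is no need to split the argument into a ``weak'' and a ``strong'' step as in Proposition \ref{weak result strong assump} and Corollary \ref{Strong result strong assum}. First I would fix an approximation $\{(\bar{u}_0^m,\bar{f}^m)\}_{m\in\mathbb{N}}$ of $(u_0,f)$ in the sense of Lemma \ref{P.L. Lions style} and let $\bar{u}^m$ be the corresponding classical solution of the Euler equations. From $\lVert u^n(t)-u(t)\rVert^2\leq 2\lVert u^n(t)-\bar{u}^m(t)\rVert^2+2\lVert \bar{u}^m(t)-u(t)\rVert^2$ and Remark \ref{Remark det}, the second term is bounded by $K/m^2$ uniformly in $t$, so the whole problem reduces to estimating $\sup_{t\in[0,T]}\lVert u^n-\bar{u}^m\rVert^2$.

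For each $m$ and $\delta>0$ I introduce the corrector $v_m$, so that $\bar{u}^m-v_m\in C([0,T];V)\cap C^1([0,T];H)$ is admissible as a test function, and write $\lVert u^n-\bar{u}^m\rVert^2=\lVert u^n\rVert^2+\lVert\bar{u}^m\rVert^2-2\langle u^n,\bar{u}^m-v_m\rangle-2\langle u^n,v_m\rangle$. I would bound $\lVert u^n\rVert^2$ by the deterministic Navier--Stokes energy equality, discarding the nonnegative term $2\nu_n\int_0^t\lVert\nabla u^n\rVert^2\,ds$ as in Corollary \ref{Strong result strong assum}; I would use the Euler energy equality from Lemma \ref{P.L. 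Lions style} for $\lVert\bar{u}^m\rVert^2$; and I would rewrite $-2\langle u^n,\bar{u}^m-v_m\rangle$ through the weak formulation of $u^n$ tested against $\bar{u}^m-v_m$, i.e. the deterministic analogue of Lemma \ref{Weak solution impl} with the stochastic terms absent and a drift $\int_0^t\langle f^n,\bar{u}^m-v_m\rangle\,ds$ added (this identity is obtained by the same partition argument, the force entering linearly). Substituting $\partial_s\bar{u}^m=-P(\nabla\bar{u}^m\cdot\bar{u}^m)+\bar{f}^m$ and using $b(u^n,\bar{u}^m,u^n)-b(u^n,\bar{u}^m,\bar{u}^m)=b(u^n-\bar{u}^m,\bar{u}^m,u^n-\bar{u}^m)$ exactly as before, the nonlinearity produces the Gronwall term $-2\int_0^t b(u^n-\bar{u}^m,\bar{u}^m,u^n-\bar{u}^m)\,ds$, while the viscous term $\nu_n\langle(-A)^{1/2}u^n,(-A)^{1/2}(\bar{u}^m-v_m)\rangle$ and the boundary-layer term $b(u^n,v_m,u^n)$ are treated precisely as in Proposition \ref{weak result strong assump}, through the corrector estimates on $v_m$ and the Hardy--Littlewood inequality.

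The genuinely new point is the bookkeeping of the three distinct forcings $f$, $f^n$, $\bar{f}^m$. Collecting every force contribution, namely $+2\int_0^t\langle f^n,u^n\rangle$ from the Navier--Stokes energy, $+2\int_0^t\langle\bar{f}^m,\bar{u}^m\rangle$ from the Euler energy, $-2\int_0^t\langle f^n,\bar{u}^m-v_m\rangle$ from the weak formulation, and $-2\int_0^t\langle u^n,\bar{f}^m\rangle$ coming from $\partial_s\bar{u}^m$, I would check that they telescope into $2\int_0^t\langle f^n-\bar{f}^m,\,u^n-\bar{u}^m\rangle\,ds+2\int_0^t\langle f^n,v_m\rangle\,ds$. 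The first of these I estimate by Cauchy--Schwarz in time as $2\sqrt{T}\,\lVert f^n-\bar{f}^m\rVert_{L^2(0,T;H)}\big(\sup_t\lVert u^n\rVert+\sup_t\lVert\bar{u}^m\rVert\big)$, where the a priori bound $\sup_t\lVert u^n\rVert^2\leq 2\lVert u_0^n\rVert^2+4T\lVert f^n\rVert_{L^2(0,T;H)}^2$ (proved exactly as (\ref{A priori estim}), again dropping the viscous term) together with its Euler counterpart makes the prefactor bounded uniformly in $n$; the second is $\leq K_m\delta^{1/2}\lVert f^n\rVert_{L^2(0,T;H)}$ by $\lVert v_m\rVert_{L^2(D)}\leq K_m\delta^{1/2}$. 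This telescoping, and verifying that it reassembles the forces into the single difference $f^n-\bar{f}^m$ tested against $u^n-\bar{u}^m$, is the only place where the argument really departs from the $f=0$ analysis; I expect it to be the main (though purely computational) obstacle, after which the viscous and boundary-layer bookkeeping is identical to the previous sections.

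Putting everything together yields, at each time $t$, an inequality of the form $\lVert u^n(t)-\bar{u}^m(t)\rVert^2\leq \Psi_n^m+2\lVert\nabla\bar{u}^m\rVert_{L^{\infty}([0,T]\times D)}\int_0^t\lVert u^n-\bar{u}^m\rVert^2\,ds$, where $\Psi_n^m$ is a $t$-independent quantity gathering $\lVert u_0^n-u_0\rVert$, $\lVert\bar{u}_0^m-u_0\rVert$, $\lVert f^n-\bar{f}^m\rVert_{L^2(0,T;H)}$, the $K_m\delta^{1/2}$ corrector errors, and the dissipation terms $K_m\nu_n\int_0^T\lVert\nabla u^n\rVert_{L^2(\Gamma_\delta)}^2\,ds$ and $\nu_n K_m\delta^{-1/2}\int_0^T\lVert\nabla u^n\rVert_{L^2(\Gamma_\delta)}\,ds$. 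Taking $\delta=c\nu_n$ and applying Gronwall's inequality gives $\sup_{t\in[0,T]}\lVert u^n-\bar{u}^m\rVert^2\leq \Psi_n^m\,e^{2T\lVert\nabla\bar{u}^m\rVert_{L^{\infty}([0,T]\times D)}}$. Taking $\limsup_{n\to\infty}$ for $m$ fixed, the hypotheses $\nu_n\to0$, $u_0^n\to u_0$, $f^n\to f$ and $\nu_n\int_0^T\lVert\nabla u^n\rVert_{L^2(\Gamma_{c\nu_n})}^2\,dt\to0$ annihilate every $n$-dependent term, the residual terms $\nu_n\int_0^T\lVert\nabla u^n\rVert_{L^2(D)}\,ds$ and $\nu_n^{1/2}\int_0^T\lVert\nabla u^n\rVert_{L^2(\Gamma_{c\nu_n})}\,ds$ being absorbed by Hölder's inequality exactly as at the end of Corollary \ref{Strong result strong assum}. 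What remains is a bound of the form $K\big(\lVert\bar{u}_0^m-u_0\rVert^2+\lVert\bar{u}_0^m-u_0\rVert+\lVert f-\bar{f}^m\rVert_{L^2(0,T;H)}\big)$, which together with the $2/m^2$ term is made smaller than any prescribed $\epsilon$ by choosing $m$ large through the approximation property of Lemma \ref{P.L. Lions style}. The arbitrariness of $\epsilon$ then yields $\lim_{n\to\infty}\sup_{t\in[0,T]}\lVert u^n-u\rVert^2=0$.
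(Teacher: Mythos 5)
Your proposal is correct and follows essentially the same route as the paper: the same decomposition via an approximating pair $(\bar{u}_0^m,\bar{f}^m)$ and the corrector $v_m$, the same single-pass Gronwall argument (the paper likewise dispenses with the weak/strong two-step splitting in the deterministic setting), and the same telescoping of the force contributions into $2\int_0^t\langle f^n-\bar{f}^m,u^n-\bar{u}^m\rangle\,ds$ estimated through the a priori bound (\ref{A priori estim}) and its Navier--Stokes counterpart. The only cosmetic difference is that you track the harmless residual $2\int_0^t\langle f^n,v_m\rangle\,ds$ explicitly, which the paper absorbs into the $K_m\delta^{1/2}$ error terms.
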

\begin{proof}
The proof is an adaptation of previous stochastic arguments, the only novelty is the presence of deterministic external forces. Hence we just give details on the new elements.  \\
Let $\{(\bar{u}_0^m,\bar{f}^m)\}_{m\in\mathbb{N}}$ approximating $(u_0,f)$ in the sense of Theorem \ref{P.L. Lions style} and $\{\bar{u}_m\}_{m\in\mathbb{N}}$ the corresponding solutions of the Euler equations, then for each $t,\ n,\ m$ we have
\begin{align*}
    \lVert u^n(t)-u(t)\rVert^2 & \leq 2 \lVert u^n(t)-\bar{u}^m(t)\rVert^2+2\lVert \bar{u}^m(t)-u(t)\rVert^2\\  & \stackrel{Remark\  \ref{Remark det}}{\leq} \frac{K}{m^2}+2 \lVert u^n(t)-\bar{u}^m(t)\rVert^2.
\end{align*}
For each $m$ and $\delta>0$ fixed, let us introduce the corrector of the boundary layer $v_m$, it satisfies previous estimates. We have at time $t$
\begin{align*}
  \lVert u^n-\bar{u}^m\rVert^2& = \lVert u^{n}\rVert^2+\lVert\bar{u}^m\rVert^2-2\langle u^{n},\bar{u}^m\rangle\\  & \stackrel{energy \ eq.,\ \lVert v_m\rVert_{L^2(D)}\leq K_m\delta^{\frac{1}{2}} }{\leq}\lVert u_0^{n}-u_0\rVert^2+2\lVert u_0\rVert^2+K\lVert u_0^n-u_0\rVert+\lVert\bar{u}_0^m-u_0\rVert^2+K\lVert\bar{u}_0^m-u_0\rVert\\ &+2\int_0^t\langle f^n,u^n\rangle\ ds+2\int_0^t\langle \bar{f}^m, \bar{u}^m\rangle\ ds-2\langle u^{n},\bar{u}^m-v_m\rangle+K_m\delta^{\frac{1}{2}}.
\end{align*}
To analyze the second-last term we use the weak formulation of $u^{n}$, taking $\bar{u}^m-v_m$ as test function. Exploiting the relation $$\langle u_0^n,\bar{u}^m_0\rangle=\lVert u_0\rVert^2+\langle u_0^n-u_0,\bar{u}_0^m-u_0\rangle+\langle u_0^n-u_0,u_0\rangle+\langle u_0,\bar{u}_0^m-u_0\rangle, $$
we get

\begin{align*} -2\langle u^{n}(t),(u^m-v_m)(t)\rangle+2\lVert u_0\rVert^2     & \stackrel{\lVert v_m\rVert_{L^2(D)}\leq K_m\delta^{\frac{1}{2}}}{\leq}   K \lVert u_0^n-u_0\rVert +K\lVert\bar{u}_0^m-u_0\rVert+ K_m\delta^{\frac{1}{2}} \\ & -2\int_0^t\langle u^{n}(s),\partial_s(\bar{u}^m-v_m)(s)\rangle\ ds\\ & +  2\nu_n\int_0^t \langle (-A)^{\frac{1}{2}}u^{n}(s), (-A)^{\frac{1}{2}}(\bar{u}^m-v_m)(s)\rangle\ ds \\ &-\int_0^t 2b(u^{n}(s),(\bar{u}^m-v_m)(s),u^{n}(s))\ ds\\ &-2\int_0^t\langle f^n,\bar{u}^m-v_m\rangle\ ds .
\end{align*}
Moreover
\begin{align*}
    -\langle u^{n}(s),\partial_s(\bar{u}^m-v_m)(s)\rangle & \stackrel{energy \ eq., \ \lVert\partial_t v_m(t)\rVert_{L^2(D)}\leq K_m\delta^{\frac{1}{2}}}{=}-\langle u^{n}(s),\partial_s \bar{u}^m(s)\rangle+K_m\delta^{\frac{1}{2}} \\ & \stackrel{Euler\  eq}{=} \langle u^{n}(s),\nabla\bar{u}^m(s)\nabla\bar{u}^m\rangle-\langle \bar{f}^m(s),u^n(s)\rangle+K_m\delta^{\frac{1}{2}}.
\end{align*}
Thanks to previous relations and noting that $$ b(u^{n},\bar{u}^m,u^{n})-b(u^{n},\bar{u}^{m},\bar{u}^{m})=b(u^{n}-\bar{u}^{m},\bar{u}^{m},u^{n}-\bar{u}^{m})$$ we can continue the estimate of $\lVert u^{n}-\bar{u}^m\rVert^2$:
\begin{align*}
     \lVert u^{n}-\bar{u}^m\rVert^2&  \stackrel{\lVert v_m\rVert_{L^2(D)}\leq K_m\delta^{\frac{1}{2}}}{\leq} 
     K_m\delta^{\frac{1}{2}}+K\lVert u_0^{n}-u_0\rVert^2+\lVert\bar{u}_0^m-u_0\rVert^2+K\lVert\bar{u}_0^m-u_0\rVert +K\lVert u_0^n-u_0\rVert\\  & + 2\int_0^t b(u^n,v_m,u^n)\ ds  +  2\nu_n\int_0^t \langle(-A)^{\frac{1}{2}}u^{n}(s), (-A)^{\frac{1}{2}}(\bar{u}^m-v_m)(s)\rangle\ ds\\ & +2\lVert \nabla\bar{u}^m\rVert_{L^{\infty}([0,T]\times D)}\int_0^t\lVert u^n-\bar{u}^m\rVert^2\ ds \\& +2\int_0^t\langle f^n-\bar{f}^m,u^n-\bar{u}^m\rangle\ ds .
\end{align*}
Arguing as in the stochastic case, we have
$$\int_0^t b(u^n,v_m,u^n)\ ds\stackrel{\lVert\rho^2(t)\nabla v_m(t)\rVert_{L^{\infty}(D)}\leq K_m\delta}{\leq} K_m\delta \int_0^T\lVert \nabla u^n\rVert_{L^2(\Gamma_{\delta})}^2\ ds$$
\begin{align*}
 \nu_n\int_0^t \langle(-A)^{\frac{1}{2}}u^{n}(s), (-A)^{\frac{1}{2}}(\bar{u}^m-v_m)(s)\rangle\ ds  \stackrel{\lVert\nabla v^m(t)\rVert_{L^2(D)}\leq K_m\delta^{-1/2}}{\leq}& \lVert\nabla \bar{u}^m\rVert_{L^{\infty}(0,T;L^2(D))}\nu_n\int_0^T\lVert\nabla u^n\rVert_{L^2(D)}\ ds\\  &  +\nu_nK_m\delta^{-1/2}\int_0^T \lVert\nabla u^n\rVert_{L^2(\Gamma_{\delta})}\ ds.
\end{align*}
For what concern the new term \begin{align*}
    \int_0^t\langle f^n-\bar{f}^m,u^n-\bar{u}^m\rangle\ ds  & \leq  \int_0^t \lVert f^n-\bar{f}^m\rVert \lVert u^n-\bar{u}^m\rVert\ ds\\ & \leq  \sqrt{T}\lVert f^n-\bar{f}^m\rVert_{L^2(0,T;H)}(\lVert u^n\rVert_{L^{\infty}(0,T;H)}+\lVert \bar{u}^m\rVert_{L^{\infty}(0,T;H)})\\  & \stackrel{energy \ eq.}{\leq}  K \left( \lVert f^n-f\rVert_{L^2(0,T;H)} +\lVert \bar{f}^m-f\rVert_{L^2(0,T;H)}\right).
\end{align*}
Thanks to this relations we can continue the estimate of $\lVert u^{n}-\bar{u}^m\rVert^2$:
\begin{align*}
      \lVert u^{n}-\bar{u}^m\rVert^2   & \leq 
     K_m\delta^{\frac{1}{2}}+K\lVert u_0^{n}-u_0\rVert^2+\lVert\bar{u}_0^m-u_0\rVert^2+K\lVert\bar{u}_0^m-u_0\rVert+K\lVert u_0^n-u_0\rVert \\ & + K_m\delta \int_0^T\lVert\nabla u^n\rVert_{L^2(\Gamma_{\delta})}^2\ ds+K \left( \lVert f^n-f\rVert_{L^2(0,T;H)} +\lVert \bar{f}^m-f\rVert_{L^2(0,T;H)}\right) \\ & + 2\lVert\nabla \bar{u}^m\rVert_{L^{\infty}(0,T;L^2(D))}\nu_n\int_0^T\lVert\nabla u^n\rVert_{L^2(D)}\ ds+\nu_nK_m\delta^{-1/2}\int_0^T \lVert\nabla u^n\rVert_{L^2(\Gamma_{\delta})}\ ds  \\ & +2\lVert\nabla\bar{u}^m\rVert_{L^{\infty}([0,T]\times D)}\int_0^t\lVert u^n-\bar{u}^m\rVert^2\ ds .
\end{align*}
Taking $\delta=c\nu_n$, by Gronwall's inequality and Holder's inequality we have
\begin{align*}
      sup_{t\in[0,T]}\lVert u^{n}-\bar{u}^m\rVert^2 &  \leq  (
     K_m\nu_n^{1/2}+K\lVert u_0^{n}-u_0\rVert^2+\lVert\bar{u}_0^m-u_0\rVert^2+K\lVert\bar{u}_0^m-u_0\rVert+K\lVert u_0^n-u_0\rVert\\ & + K_m\nu_n \int_0^T\lVert\nabla u^n\rVert_{L^2(\Gamma_{\delta})}^2\ ds+ K\left( \lVert f^n-f\rVert_{L^2(0,T;H)} +\lVert \bar{f}^m-f\rVert_{L^2(0,T;H)}\right) \\ & + \lVert\nabla \bar{u}^m\rVert_{L^{\infty}(0,T;L^2(D))}K\sqrt{\nu_n}\left(\int_0^T\nu_n\lVert\nabla u^n\rVert_{L^2(D)}^2\ ds\right)^{1/2}\\ &+K_m\left(\nu_n \int_0^T \lVert\nabla u^n\rVert^2_{L^2(\Gamma_{\delta})}\ ds\right)^{\frac{1}{2}} ) e^{2T\lVert\nabla \bar{u}^m\rVert_{L^{\infty}([0,T]\times D)}}.
\end{align*}

Taking the limsup with respect to $n$ of this expression for $m$ fixed we have
\begin{align}\label{estimate final thm det}
    \limsup_{n\rightarrow+\infty} \sup_{t\in[0,T]}\lVert u^{n}-\bar{u}^m\rVert^2\leq & \left(K\lVert\bar{u}^m_0-u_0\rVert+\lVert\bar{u}^m_0-u_0\rVert^2+\lVert \bar{f}^m-f\rVert_{L^2(0,T;H)}\right) e^{2T\lVert\nabla \bar{u}^m\rVert_{L^{\infty}([0,T\times D])}}\\ \leq & \frac{K}{m}+\frac{K}{m^2}.
\end{align}
Coming back to \begin{align*}
    \lVert u^n(t)-u(t)\rVert^2& \leq \frac{K}{m^2}+2 \lVert u^n(t)-\bar{u}^m(t)\rVert^2.
\end{align*}
If we fix $\epsilon>0$ and $\bar{m}$ such that $\frac{5K}{\bar{m}}<\epsilon$, then taking the limsup with respect to $n$ of previous expression for $m=\bar{m}$ we have 
$$\limsup_{n\rightarrow+\infty} \lVert u^n(t)-u(t)\rVert^2\leq \epsilon.$$
We have the thesis from the arbitrariness of $\epsilon$.
\end{proof}
\section*{Acknowledgement}
I want to thank Professor Franco Flandoli for useful discussions and valuable insight into the subject.
  \bibliography{demo}
  \bibliographystyle{abbrv}
\end{document}